\documentclass[11pt]{amsart}

\usepackage{amsmath}
\usepackage{amssymb}
\usepackage{amsthm}
\usepackage{graphicx} 
\usepackage{url}
\usepackage{hyperref}
\usepackage{multirow}

\textwidth=15cm 
\hoffset=-1.2cm 
\pagestyle{headings}
\setlength{\unitlength}{1mm}


\title[A template for the geodesic flow]{Coding of geodesics and Lorenz-like templates for some geodesic flows}

\author{Pierre Dehornoy}
\address{Univ.\ Grenoble Alpes, IF, F-38000 Grenoble, France; CNRS, IF, F-38000 Grenoble, France}
\email{pierre.dehornoy@ujf-grenoble.fr}
\urladdr{http://www-fourier.ujf-grenoble.fr/~dehornop/}

\author{Tali Pinsky}
\address{Department of Mathematics, University of British Columbia, Vancouver BC, Canada}
\email{tali@math.ubc.ca}
\urladdr{http://www.math.ubc.ca/~tali/}

\thanks{P. D. thanks \'Etienne Ghys, J\'er\^ome Los, and Vincent Pit for interesting discussions related to templates and geodesic flows, and the Technion for inviting him to Haifa in 2011 where this project was initiated. We thank the anonymous referee for several suggestions that improved the readability of the paper.}

\date{November 2014}

\theoremstyle{plain}
\newtheorem{introtheorem}{Theorem}

\theoremstyle{plain}
\newtheorem{theorem}{Theorem}[section]
\newtheorem{proposition}[theorem]{Proposition}
\newtheorem{lemma}[theorem]{Lemma}

\theoremstyle{definition}
\newtheorem{definition}[theorem]{Definition}

\newtheorem{question}[theorem]{Question}
\newtheorem{remark}[theorem]{Remark}



\newcommand{\Ad}{A_\downarrow}
\newcommand{\Ao}{A_0}

\newcommand{\Bd}{B_\downarrow}
\newcommand{\Bo}{B_0}
\newcommand{\bord}{\partial}
\newcommand{\bigon}{\beta}
\newcommand{\bouquet}{\mathcal{B}_{p,q,r}}
\newcommand{\bouquetHy}{\widetilde{\mathcal{B}_{p,q,r}}}

\newcommand{\Co}{C_0}
\newcommand{\Cd}{C_\downarrow}
\newcommand{\curvy}[1]{\stackrel{\thicksim}{#1}}
\newcommand{\cerA}{c_{A_B}}
\newcommand{\cerB}{c_{B_A}}
\newcommand{\cerAm}{c_{A_{B^-}}}
\newcommand{\cerBm}{c_{B_{A^-}}}
\newcommand{\Chain}{\mathcal{C}^3}

\newcommand{\deformation}{d_{p,q,r}}

\newcommand{\fgeod}{\varphi_{p,q,r}}
\newcommand{\ftemp}{\tau_{p,q,r}}
\newcommand{\fttemp}{\widetilde{\tau_{p,q,r}}}

\renewcommand{\ge}{\geqslant}
\newcommand{\Gpqr}{G_{p,q,r}}
\newcommand{\grpqr}{\widehat{\mathcal{G}_{p,q,r}}}

\newcommand{\Hopf}{\mathcal{H}^3_+}
\newcommand{\Hy}{\mathbb{H}^2}

\renewcommand{\le}{\leqslant}
\newcommand{\lex}{\le^{\mathrm{lex}}}

\newcommand{\N}{\mathbb{N}}

\newcommand{\PP}{P}

\newcommand{\sshift}{\hat\sigma}
\newcommand{\Sph}{\mathbb{S}}
\newcommand{\Spqr}{\Hy/\Gpqr}

\newcommand{\slex}{<^{\mathrm{lex}}}
\newcommand{\Spec}{S}

\newcommand{\template}{\mathcal T}
\newcommand{\Tpqr}{\template\!\!_{p,q,r}}
\newcommand{\tTpqr}{\widetilde{\template\!\!_{p,q,r}}}
\newcommand{\tdeformation}{\widetilde{d_{p,q,r}}}

\newcommand{\ut}{\mathrm{T}^1}
\newcommand{\utS}{\ut\Spqr}


\begin{document}

\maketitle

\begin{abstract}
We construct a template with two ribbons that describes the topology of all periodic orbits of the geodesic flow on the unit tangent bundle to any  sphere with three cone points with hyperbolic metric. 
The construction relies on the existence of a particular coding with two letters for the geodesics on these orbifolds.
\end{abstract}


\section{Introduction}
\label{S:Introduction}

For $p,q,r$ three positive integers---$r$ being possibly infinite---satisfying $\frac 1 p + \frac 1 q + \frac 1 r < 1$, we consider the associated hyperbolic triangle and the associated orientation preserving Fuchsian group~$\Gpqr$. 
The quotient $\Spqr$ is a sphere with three cone points of angles~$\frac {2\pi} p, \frac {2\pi} q, \frac {2\pi} r$ obtained by gluing two triangles. 
The unit tangent bundle~$\utS$ is a 3-manifold that is a Seifert fibered space. 
It naturally supports a flow whose orbits are lifts of geodesics on~$\Spqr$. 
It is called the \emph{geodesic flow on~$\utS$} and is denoted by~$\fgeod$.
These flows are of Anosov type~\cite{Anosov} and, as such, are important for at least two reasons: 
they are among the simpliest chaotic systems~\cite{Hadamard} and they are fundamental objects in 3-dimensional topology~\cite{Thu88}.
Each of these flows has infinitely many periodic orbits, which are all pairwise non-isotopic. 
The study of the topology of these periodic orbits began with David Fried who showed that many collections of such periodic orbits form fibered links~\cite{Fried}. 
More recently \'Etienne Ghys gave a complete description in the particular case of the modular surface---which corresponds to $p=2, q=3, r=\infty$---by showing that the periodic orbits are in one-to-one correspondence with periodic orbits of the Lorenz template~\cite{Ghys}.

The goal of this paper is to extend this result by giving an explicit description of the isotopy classes of all periodic orbits of~$\fgeod$ for every~$p,q,r$ (with $p,q$ finite). 
A \emph{template}~\cite{BW} is an embedded branched surface made of several ribbons and equipped with a semi-flow. 
A template is characterized by its embedding in the ambient manifold and by the way its ribbons are glued, namely by the \emph{kneading sequences} that describe the left- and rightmost orbit of every ribbon \cite{HS,dMvS}. 
Call~$\Tpqr$ the template with two ribbons whose embedding in~$\utS$ is depicted on Figure~\ref{F:Tpqr} 
and whose kneading sequences are the words $u_L, u_R, v_L, v_R$ given by Table~\ref{Table}, where the letter~$a$ corresponds to travelling along the left ribbon and $b$ to travelling along the right ribbon.
If $r$ is infinite, the template is open of both sides of every ribbon, so that the kneading sequences are not realized by any orbit of the template. 
If $r$ is finite, both ribbons are closed on the left and open on the right, so that $u_L$ and $v_L$ are realized, but not $u_R$ nor~$v_R$.

\begin{introtheorem}
  	\label{T:Template}
  	Up to one exception when $r$ is finite, there is a one-to-one correspondance between periodic orbits of the geodesic flow~$\fgeod$ on~$\utS$ and periodic orbits of the template~$\Tpqr$ such that its restriction to every finite collection is an isotopy.
	The exception consists of the two orbits of~$\Tpqr$ whose codes~$w_L$ and $w_R$ are given by Table~2 and which actually correspond to the same orbit of~$\fgeod$.
\end{introtheorem}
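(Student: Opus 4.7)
The plan is to follow the Birman--Williams paradigm adapted to the Fuchsian setting, exactly as Ghys did for the modular surface, but with the extra technical work needed to handle arbitrary $(p,q,r)$. The heart of the argument will be a symbolic coding of geodesics on $\Spqr$ by bi-infinite words in the two-letter alphabet $\{a,b\}$, together with a compatible choice of Poincaré section in $\utS$ whose first-return map is conjugate to the shift.

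First I would construct a pair of geodesic arcs on $\Spqr$ connecting the cone points in a way that cuts the orbifold into two topological bigons, so that every non-exceptional geodesic alternates between the two bigons and may be coded by the sequence of bigons it visits; the letters $a$ and $b$ correspond to the two bigons. The ``exceptional'' geodesics are those that run along the cutting arcs (or, when $r$ is finite, pass through the cone point of order $r$), and these are precisely the geodesics responsible for the one-to-many behaviour of the coding. I expect this step to reduce---on the symbolic side---to a shift of finite type with an admissibility condition that is visibly captured by the kneading sequences $u_L,u_R,v_L,v_R$.

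Next, I would lift the cutting arcs to an explicit two-rectangle Poincaré section $\Sigma \subset \utS$ for $\fgeod$, check that the first-return map is hyperbolic with a Markov structure having those same kneading inequalities, and then produce the template $\Tpqr$ by collapsing along the weak stable foliation of $\fgeod$ inside a flow-box neighbourhood of $\Sigma$. The Birman--Williams collapse is smooth and is an isotopy on any finite set of periodic orbits that avoids the finitely many ``branch line'' orbits, which on the template side are realized only by the kneading sequences $u_L,v_L$ (and, when $r=\infty$, by nothing). This step also gives the one-to-one correspondence of periodic orbits with periodic symbolic words up to the coding ambiguity. A separate verification, done by direct inspection of the lifts of the bigons to $\utS$, is needed to identify the resulting branched surface with the template drawn in Figure~\ref{F:Tpqr} and to recover the precise words in Table~\ref{Table}.

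Finally, I would address the exception. When $r$ is finite, the coding of the single geodesic that runs through the cone point of order $r$ can be read in two different ways depending on the choice of side at the cone point, and these two ways produce precisely the two periodic words $w_L$ and $w_R$; the two corresponding periodic orbits of $\Tpqr$ therefore represent the same orbit of $\fgeod$. The hardest step, I expect, will be the verification that the collapse map carries the ambient isotopy class of a finite collection of periodic orbits of the flow to that of the corresponding periodic orbits on the template: one must check that no spurious crossings are introduced by the collapse, which requires a careful analysis of how the weak stable leaves of $\fgeod$ wind around the Seifert fibration of $\utS$ near the exceptional fibres over the three cone points.
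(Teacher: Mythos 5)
Your overall strategy is the classical Birman--Williams paradigm: build a Markov section, conjugate the first-return map to a shift, and collapse along the stable foliation to obtain a branched surface. This is a genuinely different route from the paper, which avoids the stable foliation entirely. The paper instead works $\Gpqr$-equivariantly in $\Hy$: it introduces a planar graph~$\grpqr$ (vertices at lifts of $\Ao$ and $\Bo$, edges at lifts of $(\Ao\Bo)$), codes paths in that graph by angular data at vertices, and selects a canonical path for each geodesic using \emph{pairs of spectacles} (Definitions~\ref{D:Pair}--\ref{D:Good}). The template is then built directly by replacing vertices by roundabouts and lifting to the unit tangent bundle; the deformation from the geodesic flow to the template is an explicit level-by-level isotopy in~$\ut\Hy$ keyed to the forward endpoint~$\xi\in\bord\Hy$, and the isotopy property on finite collections follows simply from the observation that distinct periodic orbits have distinct forward endpoint directions. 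This sidesteps the delicate ``no spurious crossings under collapse'' verification that you flag as your hardest step.

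However, there is a genuine gap in the proposal as written. You claim to cut~$\Spqr$ into two topological bigons and to code a geodesic by ``the sequence of bigons it visits,'' with $a$ and $b$ corresponding to the two bigons. Taken literally, consecutive visits must alternate between the two bigons, so every code would be $(ab)^\infty$ and the coding would carry no information. Something more refined is needed: either a cutting-sequence coding that records the side of each face crossed together with left/right turning data, or a coding by the angle turned at each vertex as in the paper's Definition~\ref{D:coding}. In either case, obtaining exactly two letters (so as to get a two-ribbon template), and then deriving the specific kneading inequalities of Table~\ref{Table}, is not automatic; a generic Markov section for a triangle-group geodesic flow has more than two rectangles, and the Birman--Williams collapse of such a section yields a template with more than two ribbons (this is exactly what happens in~\cite{Pierre}). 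The paper's achievement is precisely the choice of a graph, a two-letter angular coding, and an accurate pair of spectacles that make a two-ribbon template possible; ``direct inspection'' will not recover this without the analogous combinatorial input. Your identification of the exception with the geodesic through the cone point of order~$r$ is correct in spirit and matches the $w_L$/$w_R$ ambiguity of Lemma~\ref{L:BiUniqueness}, but it is downstream of a coding step that, as described, does not yet work.
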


\begin{figure}[ht]
  	\begin{picture}(90,70)(0,0)
      		\put(0,0){\includegraphics*[width=.6\textwidth]{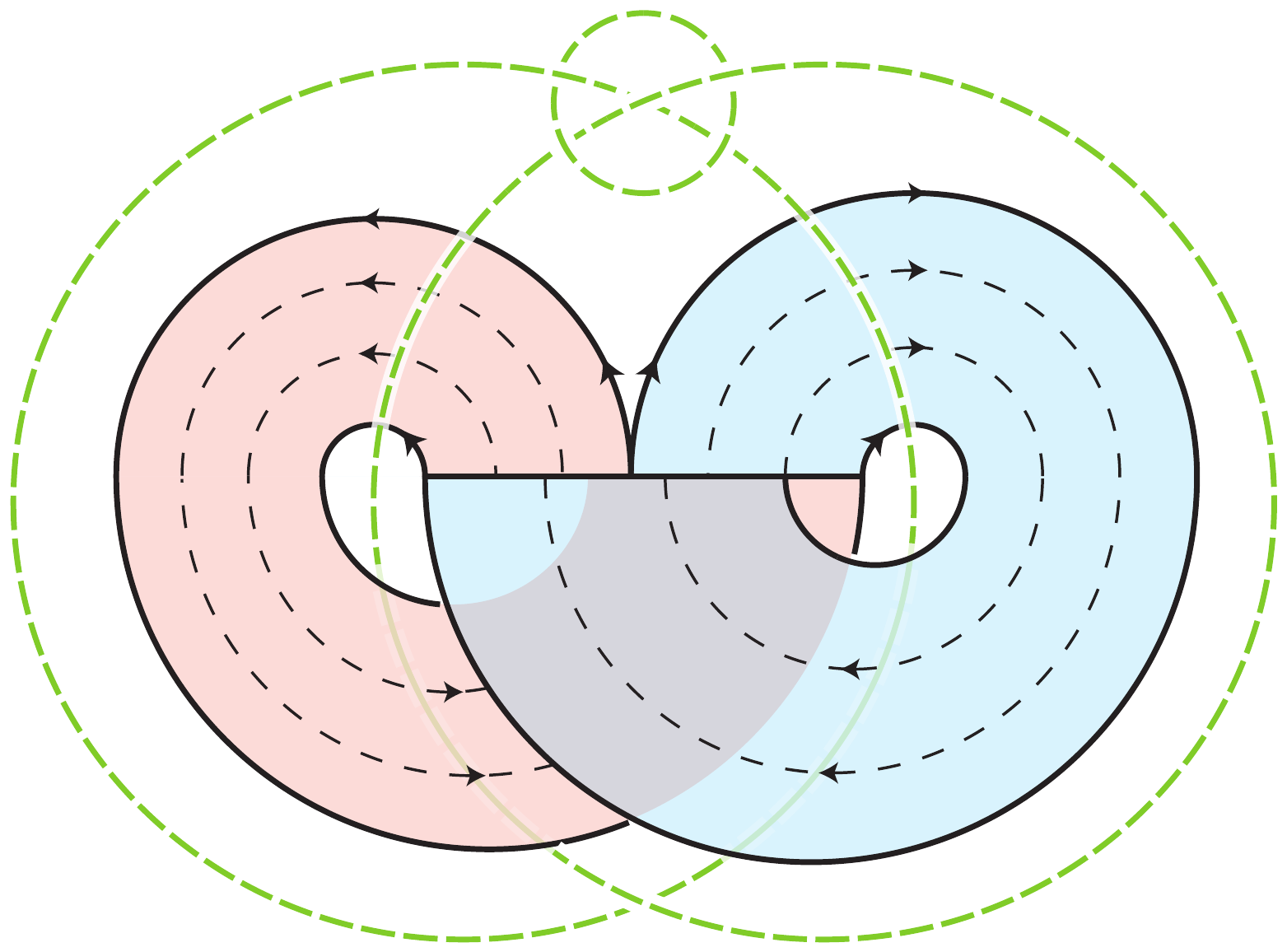}}
      		\put(-4,45){$p{-}1$}
      		\put(87,45){$q{-}1$}
      		\put(42,67){$r{-}1$}
      		\put(18.6,35.5){$u_L$}
      		\put(67,35.5){$v_R$}
      		\put(47.5,42){$v_L$}
      		\put(36.5,42){$u_R$}
  	\end{picture}
  	\caption{\small The template~$\Tpqr$ in~$\utS$. 
  	The 3-manifold~$\utS$ is obtained from~$\Sph^3$ by surgeries on a three component Hopf link (dotted) with the given indices. 
  	The template~$\Tpqr$ is characterized by its embedding in~$\utS$ and by the kneading sequences that describe the orbits of the extremities of the ribbons. 
  	When $r$ is infinite, $\utS$ is the open manifold obtained by removing the component labelled~$r-1$ of the 3-component Hopf link.
	When $p=2$, the exit side of the left ribbon is strictly included into the entrance side of the right ribbon.}
  	\label{F:Tpqr}
\end{figure}

\begin{table}[htb]
	\begin{center}
	\begin{tabular}{c|c|c}
		& $u_L$ & $v_R$ \\ \hline
   		$r$ infinite & $(a^{p-1}b)^\infty$  
  	  	& $(b^{q-1}a)^\infty$ \\ \hline
		\hline
		$p, q, r>2$& & \\ \hline
  		$r$ odd & $((a^{p-1}b)^{\frac{r-3}{2}} a^{p-1}b^2)^\infty$  
  	  	& $((b^{q-1}a)^{\frac{r-3}{2}} b^{q-1}a^2)^\infty$ \\ \hline
		$r$ even & $((a^{p-1}b)^{\frac{r-2}{2}} a^{p-2}(ba^{p-1})^{\frac{r-2}{2}}b^2)^\infty$
		& $((b^{q-1}a)^{\frac{r-2}{2}} b^{q-2}(ab^{q-1})^{\frac{r-2}{2}}a^2)^\infty$ \\ \hline
		\hline
		$p=2, q>2,r>4$& & \\ \hline
   		$r$ odd & $((ab)^{\frac{r-3}{2}} ab^2)^\infty$  
  	  	& $b^{q-1}((ab^{q-1})^{\frac{r-5}{2}} ab^{q-2})^\infty$ \\ \hline
  		$r$ even & $((ab)^{\frac{r-4}{2}} ab^2)^\infty$  
  	  	& $b^{q-1}((ab^{q-1})^{\frac{r-4}{2}} ab^{q-2})^\infty$ \\ \hline
	\end{tabular}

	\vspace{1mm}
	 $v_L:=bu_L,$ 
	 
	 $u_R:=av_R$
	 \end{center} 
	\caption{\small The kneading sequences of the template~$\Tpqr$. }
	\label{Table}
\end{table}

\begin{table}[htb]
	\begin{center}
	\begin{tabular}{c|c|c}
		$p,q,r>2$& $w_L$ & $w_R$ \\ \hline
   		$r$ odd & $((a^{p-1}b)^{\frac{r-3}{2}} a^{p-2}b)^\infty$  
  	  	& $((b^{q-1}a)^{\frac{r-3}{2}} b^{q-2}a)^\infty$ \\ \hline
		$r$ even & $((a^{p-1}b)^{\frac{r-2}{2}} a^{p-2}b(a^{p-1}b)^{\frac{r-4}{2}}a^{p-2}b)^\infty$
		& $((b^{q-1}a)^{\frac{r-2}{2}} b^{q-2}a(b^{q-1}a)^{\frac{r-4}{2}}b^{q-2}a)^\infty$ \\ \hline
		\hline
		$p=2,q>2,r>4$& & \\ \hline
   		$r$ odd & $((ab)^{\frac{r-3}{2}} ab^2)^\infty$  
  	  	& $((ab^{q-1})^{\frac{r-5}{2}} ab^{q-2})^\infty$ \\ \hline
		$r$ even & $((ab)^{\frac{r-4}{2}} ab^2)^\infty$
		& $((ab^{q-1})^{\frac{r-4}{2}} ab^{q-2})^\infty$ \\ \hline
	\end{tabular}
	\end{center}
	\caption{\small The codes of the two periodic orbits of~$\Tpqr$ that correspond to the same periodic orbit of the geodesic flow~$\fgeod$.}
	\label{TableExc}
\end{table}

In subsequent works, we intend to use the template~$\Tpqr$ for deriving important properties of the geodesic flow~$\fgeod$, in particular the fact that all periodic orbits form prime knots, or the left-handedness (a notion introduced in~\cite{GhysLeftHanded}) of~$\fgeod$ (see~\cite{PierreLeftHanded}).

The kneading sequences (Table~\ref{Table}) may look complicated. 
In a sense, it is the cost for having a template with two ribbons only. 
The proof suggests that, in the case of $r$ finite, there are infinitely many other possible words, thus leading to other templates (the difference lying in the kneading sequences, not in the embedding). 
But these words are not simpler than the ones we propose here. 

Theorem~\ref{T:Template} is to be compared with previous works by Ghys~\cite{Ghys}, Pinsky~\cite{Tali}, and Dehornoy~\cite{Pierre}. 
The results of~\cite{Ghys, Tali} deal with orbifolds of type $\Hy/G_{2,3,\infty}$ and~$\Hy/G_{2,q,\infty}$ for all $q\ge 3$ respectively. 
The existence of a cusp in these cases allows the obtained templates to have trivial kneading sequences of the form~$a^\infty$ and~$b^\infty$. 
These constructions can be recovered from Theorem~\ref{T:Template}. 
The construction in~\cite{Pierre} deals with compact orbifolds, including the case of~$\Spqr$, but the constructed templates have numerous ribbons instead of two here, and the kneading sequences are not determined. 
Thus, Theorem~\ref{T:Template} is a generalization of the results of~\cite{Ghys, Tali} to compact orbifold of type~$\Spqr$, as well as a simplification of the construction of~\cite{Pierre} with more precise information.

The main idea for the proof of Theorem~\ref{T:Template} is to distort all geodesics in~$\Spqr$ onto an embedded graph that is just a bouquet of two oriented circles. 
We perform the distortion in a equivariant way in $\Hy$ (or in fact in $T^{1} \Hy$), and this allows one to deform through the cone points. We use this freedom to fix a direction in which the deformed geodesic is allowed to wind around a cone point, and this allows us to choose such a simple graph and a template with only two ribbons.
In order to have a one-to-one correspondence between orbits of the flow and orbits of the template, the distortion has to obey some constraints. 
We introduce the notion of \emph{spectacles}, which allows one to verify that we always get at most one coding for any semi-infinite geodesic
and that the coding is nice. 
Ultimately we introduce what we call an \emph{accurate pairs of spectacles}, that ensures we do get some coding for any bi-infinite geodesic.
The main intermediate step is the following result which might be of independent interest (see the introduction of Section~\ref{S:Coding} for the definitions of the graph~$\grpqr$, of the code, and of the supershift operator~$\sshift$)

\begin{introtheorem}
	\label{P:Coding}
	Assume that $\eta, \xi$ in~$\bord\Hy$ are the two extremities of a lift in~$\Hy$ of a periodic geodesic in~$\Spqr$, then there exists a bi-infinite path~$\gamma$ in~$\grpqr$ that connects~$\eta$ to~$\xi$ and whose code $w(\gamma)$ satisfies $u_L\le \sshift^k(w(\gamma))< u_R$ (\emph{resp.} $v_L\le \sshift^k(w(\gamma))< v_R$) for every shift $\sshift^k(w(\gamma))$ that begins with a power of the letter~$a$ (\emph{resp.} $b$). 
This path is unique, except for the paths encoded by the words~$w_L$ and $w_R$ that have the same extremities. 
\end{introtheorem}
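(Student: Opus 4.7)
The plan is to use the equivariant deformation, announced in the introduction, that pushes every geodesic of $\Spqr$ onto the bouquet of two oriented circles; this deformation is governed by an \emph{accurate pair of spectacles} in~$\ut\Hy$, whose existence should be established before invoking it. Given $\eta, \xi \in \bord\Hy$, I would lift the corresponding periodic geodesic to a bi-infinite geodesic in~$\Hy$ and deform it to a bi-infinite path $\gamma$ on the preimage of the bouquet, which is exactly~$\grpqr$. The code $w(\gamma)$ is then the sequence of letters $a, b$ recording which circle of the bouquet is traversed at each step. Existence of $\gamma$ comes from the definition of accuracy, which is designed precisely to guarantee that every bi-infinite geodesic admits such a coding.

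Next I would verify that every shift $\sshift^k(w(\gamma))$ beginning with a power of~$a$ (resp.\ $b$) lies in the lexicographic interval $[u_L, u_R)$ (resp.\ $[v_L, v_R)$). A maximal block $a^{p-1}$ in the code corresponds to the deformed geodesic making a full turn around a lift of the cone point of order $p$, and similarly $b^{q-1}$ near the cone point of order $q$; longer blocks cannot appear because the deformation is confined to a small neighbourhood of the bouquet. Globally, the admissible sequence of such blocks is controlled by the cone point of order~$r$: the words $u_L, v_L, u_R, v_R$ are, by construction, the codes realized by the orbits hugging the boundary of the two ribbons, and an accurate pair of spectacles forces any other code to lie lexicographically between them. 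I would then derive the explicit formulas of Table~1 by a direct local computation at each cone point, with separate treatment of the parity of~$r$---since the two triangles adjacent to the cone point of order $r$ either swap or preserve the roles of $a$ and $b$ according to parity---and of the case $p=2$, where the exit side of the left ribbon is strictly contained in the entrance side of the right.

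For uniqueness, the crucial point is that $\gamma$ is determined by the ordered sequence of lenses of the accurate pair of spectacles that the lifted geodesic crosses, together with the direction of each crossing. A second path with the same endpoints can arise only when the deformed geodesic passes through the vertex of the bouquet tangentially, in which case it may be pushed to either side. In the compact case, a case analysis of such tangencies near a lift of the cone point of order $r$ shows that exactly one $\Gpqr$-conjugacy class of axes produces this ambiguity, and its two possible codes match the formulas for $w_L$ and $w_R$ in Table~2. The main obstacle I foresee is combinatorial rather than conceptual: pinning down the precise words in Tables~1 and~2---as opposed to the nearby alternatives that the introduction notes also yield valid templates---requires careful bookkeeping of how the ribbons glue at their branching edges, and this is the step where the parity of $r$ and the special status of $p=2$ enter most explicitly.
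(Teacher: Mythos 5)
Your proposal correctly identifies the overall machinery — the pair of spectacles on~$\bord\Hy$, the notion of admissible path, and the passage to a bi-infinite path on~$\grpqr$ whose code must satisfy lexicographic constraints. This is indeed the skeleton of the paper's proof. But there are two genuine gaps where your sketch stops exactly at the points where the real work lies.

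First, you flag that existence of an accurate pair of spectacles ``should be established before invoking it,'' but you give no indication of how. This is not a routine verification. The paper constructs an explicit pair~$\Spec^f$ by introducing \emph{bigons} — infinite strips of faces of $\Hy\setminus\grpqr$ determined by always exiting through the opposite edge — and taking the left/right extremities of~$I^f_{\Ao\to\Bo}$ to be the normal extremity~$\xi_{\Ao\Bo}$ of the corresponding bigon. Accuracy of~$\Spec^f$ then requires a Busemann-function monotonicity argument (Lemma~\ref{L:HalfPathsConverge}) to show the admissible semi-infinite paths actually converge to~$\xi$, followed by a compactness/extraction argument (Lemma~\ref{L:PathsConverge}) to get bi-infinite paths. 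When $r=\infty$ the spectacles are essentially forced, but for $r$ finite there is genuine freedom, and a careless choice (e.g.\ one where $I_{\Ao\to\Bo}$ omits the endpoint of the ray from $\Ao$ through~$\Bo$) yields non-convergent paths. None of this is visible in your sketch.

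Second, your uniqueness argument via ``tangency at a vertex of the bouquet'' does not match the mechanism in the paper and, more to the point, is not developed enough to determine~$w_L$ and~$w_R$ or to rule out other failures of uniqueness. The paper's Lemma~\ref{L:BiUniqueness} argues combinatorially: two distinct admissible paths with the same endpoints must be separated by an infinite strip of faces, the admissibility constraints (equivalently, the kneading inequalities) bound the width of this strip to a single face because a path cannot use more than roughly half the sides of a $2r$-gon, and the constraint $\xi\notin I_{A^i\to B^i}\cup I_{B^i\to A^i}$ for every~$i$ then forces the codes $x_L^\infty$ and~$x_R^\infty$. The parity of~$r$ enters here in a concrete way (whether each path uses $r-1$ sides per face, or alternates $r$ and $r-2$), not merely through ``swapping the roles of $a$ and $b$'' as your sketch suggests. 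Without this strip-of-faces analysis you cannot show that the exceptional pair is the \emph{only} exception, nor can you pin down the formulas of Table~2.

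So the framework is right, but the two load-bearing steps — constructing an accurate pair of spectacles, and the combinatorial classification of failures of uniqueness — are each missing an actual argument.
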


Theorem~\ref{P:Coding} is reminiscent of results by Caroline Series~\cite[Thm 2.7]{Series81} and J\'er\^ome Los~\cite[Thm 4.4]{Los}. 
The novelty here is that we deal with monoids instead of groups, and that we cover some cases that were not addressed by Series (for small values of $p, q,$ and $r$).

The article is organized as follows.
First we recall in Section~\ref{S:US} the topology of the space~$\utS$ by giving a surgery presentation on a link in~$\Sph^3$.
We then describe in Section~\ref{S:Coding} a coding with two letters only for geodesics on~$\Spqr$. 
In Section~\ref{S:Tpqr}, we use the coding for constructing a template with two ribbons in~$\utS$ and describe its kneading sequences in terms of the chosen coding. 
We conclude the article with a few questions in Section~\ref{S:Questions}.
 
In the whole text, $p,q,r$ denote three natural numbers (with $r$ possibly infinite) satisfying $\frac 1 p+\frac1 q+\frac1 r<1$. 
We denote by $\Ao\Bo\Co$ a fixed hyperbolic triangle in $\Hy$ with respective angles $\pi/p, \pi/q$, and~$\pi/r$.
We denote by $\Gpqr$ the orientation-preserving Fuchsian group generated by the rotations of angles $2\pi/p, 2\pi/q$, and~$2\pi/r$ around $\Ao, \Bo,$ and~$\Co$. 
The quotient $\Spqr$ is then a 2-dimensional orbifold, namely a sphere with three cone points (or a cusp when $r$ is infinite), that we call $\Ad, \Bd, \Cd$.
The letters $A$ and $B$ with sub/superscripts will always denote points of$~\Hy$ in the~$\Gpqr$-orbit of~$\Ao$ and $\Bo$ respectively.


\section{The topology of~$\utS$}
\label{S:US}

The unit tangent bundle~$\utS$ is a Seifert fibered space whose regular fibers correspond to fibers of regular points of~$\Spqr$ and exceptional fibers to the fibers of the cone points $\Ad, \Bd, \Cd$. 
The goal of this section (Proposition~\ref{P:Surgery}) is to give a surgery presentation of the 3-manifold~$\utS$ with explicit coordinates. 
This has already been done, for example in~\cite[p.\ 183]{Montesinos}. 
The presentation we give here is slightly different, but more suited to the description of the template~$\Tpqr$. 
Using Kirby calculus (see~\cite[p.\ 267]{Rolfsen}), one can check that our presentation yields the same manifold as Montesinos'.
In this section, the numbers $p,q,r$ can be finite or infinite.

We begin with some elementary lemmas that help us fixing some notation. 
Assume that $\gamma$ is an oriented closed curve immersed in a surface.
Then the restriction~$\ut\gamma$ to the fibers of the points of~$\gamma$ of the unit tangent bundle to the surface is a 2-torus which supports four particular homology classes, represented by four vector fields: 

1. the fiber~$f$ of a given point of~$\gamma$, oriented trigonometrically,

2. an index~$0$ vector field~$z_\gamma$; for example, a vector field that is everywhere tangent to~$\gamma$,

3. an index~$+1$ vector field~$u_\gamma^+$, that is, a vector field that rotates once to the left when traveling along~$\gamma$,

4. an index~$-1$ vector field~$u_\gamma^-$, that is, a vector field that rotates once to the right when traveling along~$\gamma$.

\begin{lemma}
	\label{L:Basis}
	For $\gamma$ an embedded oriented curve, the classes $[f]$ and $[z_\gamma]$ form a basis of~$\ut\gamma$. 
	In this basis, one has $[u_\gamma^-] = -[f]+[z_\gamma]$ and $[u_\gamma^+] = [f]+[z_\gamma]$. 
\end{lemma}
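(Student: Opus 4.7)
The plan is to proceed in three short stages: identify $\ut\gamma$ as a 2-torus, exhibit a convenient basis of its first homology using intersection numbers, and then compute the classes of the vector fields $u_\gamma^\pm$ by a winding-number argument relative to~$z_\gamma$.

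First I would observe that since the surface is oriented and $\gamma$ is an embedded oriented circle, the restriction of the unit tangent bundle to~$\gamma$ is an oriented $S^1$-bundle over~$S^1$, hence homeomorphic to a 2-torus, so $H_1(\ut\gamma;\Z)\cong\Z^2$. The fiber~$f$ is then a meridian circle of this torus. Any nowhere-vanishing vector field along~$\gamma$---in particular~$z_\gamma$---provides a section of $\ut\gamma\to\gamma$, hence a curve in~$\ut\gamma$ meeting each fiber transversally in exactly one point. The algebraic intersection $[f]\cdot[z_\gamma]$ is therefore $\pm 1$, which shows that $\{[f],[z_\gamma]\}$ is a $\Z$-basis of~$H_1(\ut\gamma;\Z)$.

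Next I would compute $[u_\gamma^+]$ by comparing it with $[z_\gamma]$. Consider the family of vector fields along~$\gamma$ obtained by rotating, at each point, the vector given by~$z_\gamma$ through the shortest trigonometric angle to the direction prescribed by~$u_\gamma^+$. Locally this is a homotopy between the two sections, but the global angular variation fails to close up: since $u_\gamma^+$ has index~$+1$ and $z_\gamma$ has index~$0$, travelling once around~$\gamma$ produces exactly one extra left rotation of~$u_\gamma^+$ relative to~$z_\gamma$. Said differently, the two sections, viewed as closed curves in~$\ut\gamma$, cobound an annulus whose boundary also picks up one positively-oriented copy of the fiber. This gives $[u_\gamma^+]-[z_\gamma]=[f]$, that is, $[u_\gamma^+]=[f]+[z_\gamma]$. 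Applied verbatim to~$u_\gamma^-$, whose index is $-1$, the same argument produces a fiber-winding of~$-1$ and yields $[u_\gamma^-]=-[f]+[z_\gamma]$.

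The only genuine obstacle is a bookkeeping point: one must check that the trigonometric orientation on each fiber is compatible with the convention that a left-rotating (\emph{i.e.} index~$+1$) field contributes $+[f]$ rather than $-[f]$. This is a fixed sign to verify once and for all using the oriented basis $(\gamma'(t), \mathbf{n}(t))$ of the tangent plane; once that normalization is in place, the expressions for $[u_\gamma^\pm]$ are forced by the winding computation above.
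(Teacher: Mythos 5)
Your proof is correct and follows essentially the same route as the paper's: identify $[f]$ and $[z_\gamma]$ as a basis via a single transverse intersection, then determine the coefficient of $[f]$ in $[u_\gamma^\pm]$ from the index of the vector field relative to $z_\gamma$. You simply spell out the winding-number bookkeeping more carefully than the paper, which states it in one line.
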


\begin{proof}
	The vector fields $f$ and $z_\gamma$ have exactly one point in common where they intersect transversally, so they form a basis of~$H_1(\ut\gamma)$. 
	Rotating along a fiber amounts to turning to the left, so one get $[u_\gamma^+] = [f]+[z_\gamma]$. 
	Similarly one obtains $[u_\gamma^-] = -[f]+[z_\gamma]$.
\end{proof}

Let $\PP$ be an oriented pair of pants whose three oriented boundary components are denoted by~$\gamma_A, \gamma_B, \gamma_C$. 
Consider the vector field~$v$ on~$\PP$ given by Figure~\ref{F:PairOfPants} and denote by~$v_A, v_B, v_C$ its respective restrictions to~$\gamma_A, \gamma_B, \gamma_C$. 
Since $v_A$ is tangent to~$\gamma_A$, one has $[v_A] = [z_A]$ on~$\ut\gamma_A$. 
Similarly one has~$[v_B] = [z_B]$ on~$\ut\gamma_B$. 
Now $v_C$ is not tangent to~$\gamma_C$. Since it rotates plus one times when traveling along~$\gamma_C$, one has~$[v_C] = [u^+_C]$ on~$\ut\gamma_C$.

\begin{figure}[ht]
   	\begin{picture}(60,50)(0,0)
      	\put(0,-3){\includegraphics*[width=.4\textwidth]{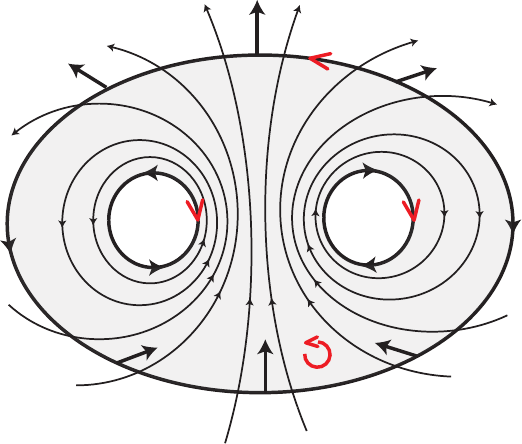}}
      	\put(15.5,22.2){$\gamma_A$}
      	\put(40.2,22.2){$\gamma_B$}
      	\put(35.5,43.4){$\gamma_C$}
   	\end{picture}
   	\caption{\small A pair of pants~$\PP$ and the vector field~$v$. 
	The orientations of the boundary components induced by the orientation of~$\PP$ are with larger red arrows.}
	\label{F:PairOfPants}
\end{figure} 

We denote by $\Chain=U^A\cup U^B \cup U^C$ the 3-component link in~$\Sph^3$ that is a chain of 3 unknots, with $U^C$ in the middle (see Figure~\ref{F:3Chain}).

\begin{figure}[ht]
   	\begin{picture}(120,65)(0,0)
      	\put(0,0){\includegraphics*[width=.8\textwidth]{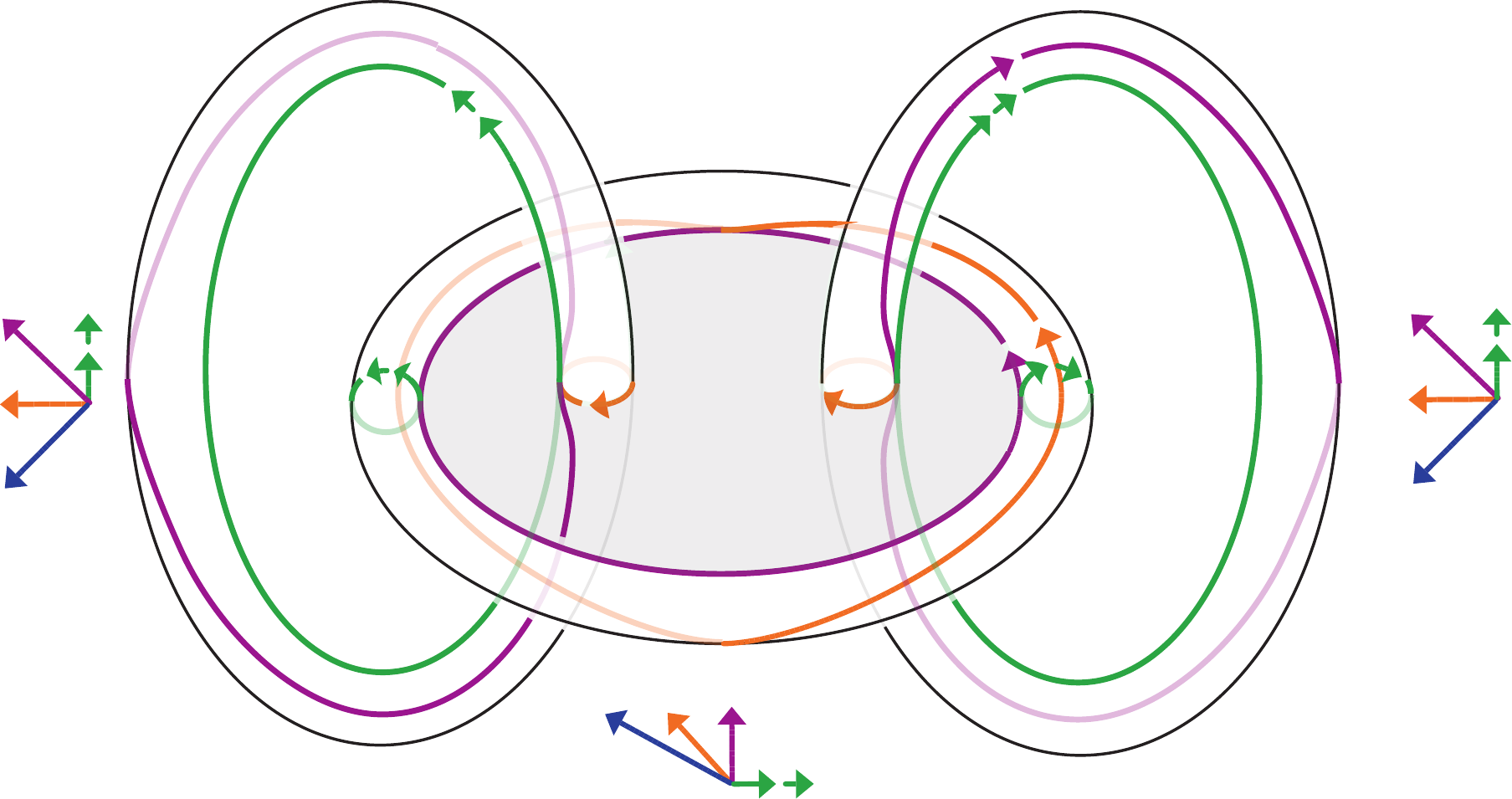}}
	\put(16,60){$U^A$}
	\put(96,60){$U^B$}
	\put(56,51){$U^C$}
	\put(65,1){$f_C$}
	\put(57,8){$u^+_C$}
	\put(51,8){$z_C$}
	\put(43,6){$u^-_C$}
	\put(5,40){$f_A$}
	\put(-3,40){$u^+_A$}
	\put(-4,31){$z_A$}
	\put(-3,22){$u^-_A$}
	\put(117,40){$f_B$}
	\put(107.5,40){$u^+_B$}
	\put(107.5,31){$z_B$}
	\put(108,22){$u^-_B$}
    	\end{picture}
   	\caption{\small The unit tangent bundle to a pair of pants~$\ut \PP$ seen as a product~$\PP\times\Sph^1$, where the section corresponding to the vector field~$v$ on~$\PP$ of Figure~\ref{F:PairOfPants} is shaded. The vector fields~$f$ (double arrow, green), $z$ (orange) and $u^+$ (purple) for the three boundary tori~$\ut\gamma_A, \ut\gamma_B$ and $\ut\gamma_C$ are represented. Next to every torus is represented the homology classes of these vector fields and of $u^-$ in the $\{$meridian, longitude$\}$-basis.}
	\label{F:3Chain}
\end{figure} 

\begin{lemma}
	\label{L:3Chain}
	The unit tangent bundle to the pair of pants~$\PP$ is homeomorphic to~$\Sph^3\setminus\Chain$, where the respective meridians of the components~$U^A, U^B, U^C$ of~$\Chain$ are represented by the vector fields~$-z_A, -z_B, f_C$  and the respective longitudes by~$f_A, f_B, u^+_C$.
\end{lemma}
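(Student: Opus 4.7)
My plan proceeds in two stages: first identify $\ut\PP$ and $\Sph^3\setminus\Chain$ as abstract 3-manifolds by exhibiting each as $\PP\times\Sph^1$, and then track the basis classes on each boundary torus to read off the meridians and longitudes.

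For the abstract identification, I trivialize $\ut\PP$ using the vector field~$v$ of Figure~\ref{F:PairOfPants}: sending a unit tangent vector $\xi$ based at~$p$ to the pair $(p,\angle(v(p),\xi))$ gives a diffeomorphism $\ut\PP\to\PP\times\Sph^1$, under which the horizontal section $\PP\times\{0\}$ is the graph of~$v$. On the other side I write $\Sph^3 = V_1 \cup_T V_2$ as two standard solid tori glued along a Heegaard torus, take $\nu(U^C) = V_2$, and take $U^A, U^B$ to be two disjoint parallel longitudes inside $V_1 \cong D^2\times\Sph^1$. Then
\[
  \Sph^3\setminus\nu(\Chain) \;=\; V_1\setminus(\nu(U^A)\cup\nu(U^B)) \;\cong\; \bigl(D^2\setminus(D_1\sqcup D_2)\bigr)\times\Sph^1,
\]
a product whose $\Sph^1$-factor is parallel to the cores of $V_1, U^A, U^B, U^C$. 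Matching the two horizontal sections gives the required homeomorphism.

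For the meridian/longitude dictionary on $\partial\nu(U^A)$, the meridional disk $D_1\times\{0\}\subset\nu(U^A)$ has boundary $\gamma_A\times\{0\}$, which under the section identification represents $v|_{\gamma_A} = z_A$; the boundary orientation of the meridian disk is opposite to the orientation $\gamma_A$ inherits from~$\PP$, yielding $m_A = -z_A$. The fiber $f_A = \{q\}\times\Sph^1$ and the core $U^A = \{p_1\}\times\Sph^1$ are two parallel longitudes of~$V_1$, hence cobound an embedded annulus inside~$V_1\subset\Sph^3$, so $\lk(f_A, U^A)=0$ and $\ell_A = f_A$; the case of~$U^B$ is identical. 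On $\partial\nu(U^C) = T$ the two torus generators swap roles: the meridian of~$V_2$ is the longitude direction of~$V_1 = D^2\times\Sph^1$, namely the fiber~$f_C$, giving $m_C = f_C$, while the meridian of~$V_1$, the curve $\partial D^2\times\{0\}$, bounds the disk $D^2\times\{0\}\subset V_1$ disjointly from~$U^C$, making it the zero-framed longitude; under the $v$-trivialization this horizontal circle realizes $v|_{\gamma_C}$, which by the construction of~$v$ lies in the class~$[u^+_C]$, so $\ell_C = u^+_C$.

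The main obstacle is bookkeeping the signs: the asymmetry between $-z_A, -z_B$ and $+f_C$ in the list of meridians reflects the reversal of boundary orientation on~$T$ when passing from the $V_1$-viewpoint to the $V_2$-viewpoint, combined with the fact that $\gamma_A,\gamma_B$ receive boundary orientations opposite to those of the small disks $\partial D_1, \partial D_2$. Once coherent orientations are fixed on~$\Sph^3$, on each solid torus, and on each boundary torus via the outward normal, Figure~\ref{F:3Chain} is drawn precisely so that this sign check becomes visual.
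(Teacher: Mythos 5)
Your approach is essentially the same as the paper's: trivialize $\ut\PP$ as $\PP\times\Sph^1$ via the vector field~$v$, identify the product with $\Sph^3\setminus\Chain$, and read off the meridian/longitude dictionary from the graph of~$v$ together with the fibers. You merely make the Heegaard splitting of $\Sph^3$ and the sign conventions explicit where the paper delegates them to Figure~\ref{F:3Chain}.

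One step needs a better justification: you conclude $\lk(f_A,U^A)=0$ from the fact that $f_A$ and $U^A$ cobound an embedded annulus in~$V_1$, but cobounding an annulus does not by itself pin down a linking number---two fibers of the Hopf fibration cobound an annulus in $\Sph^3$ yet have linking $\pm1$. What actually makes your conclusion correct is that $f_A$, as a product longitude of~$V_1$, is isotopic in the complement of~$U^A$ to a longitude on $\partial V_1$, i.e.\ to a meridian of~$V_2$, which bounds a disk in~$V_2$ disjoint from~$U^A\subset V_1$; hence $\lk(f_A,U^A)=0$. This is exactly the ``bounds a disk in the complement'' argument you already use to identify~$\ell_C$, so you need only apply it uniformly to close the gap.
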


\begin{proof}
	Since $\PP$ is an open surface, the unit tangent bundle~$\ut \PP$ is homeomorphic to the product~$\PP\times\Sph^1$, that is, to~$\Sph^3\setminus~\Chain$ (see Figure~\ref{F:3Chain}). 
	The vector field~$v$ given by Figure~\ref{F:PairOfPants} yields a section for this product whose boundary consists of two meridians of~$U^A$ and~$U^B$ respectively and one longitude of~$U^C$. 
	The respective longitudes of~$U^A, U^B$ and the meridian of~$U^C$ correspond to three fibers (as can be checked on Figure~\ref{F:3Chain}). 
\end{proof}

The above presentation of~$\ut \PP$ is not symmetric in the three boundary components. 
This symmetry can be recovered using a twist.
Denote by~$\Hopf = H^A\cup H^B \cup H^C$ the positive 3-component Hopf link in~$\Sph^3$ (the green link in Figure~\ref{F:Tpqr}).

\begin{lemma}
\label{L:SurgeryInf}
	The unit tangent bundle to the pair of pants~$\PP$ is homeomorphic to~$\Sph^3\setminus\Hopf$, where the respective meridians of the components~$H^A, H^B, H^C$ of~$\Chain$ are represented by the vector fields~$-z_A, -z_B, -z_C$  and the respective longitudes by~$u^+_A, u^+_B, u^+_C$. 
\end{lemma}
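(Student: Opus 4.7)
The plan is to reinterpret the product structure $\ut P = P \times S^1$ used in Lemma~\ref{L:3Chain} through a different, more symmetric trivialization coming from the Hopf fibration. Concretely, I would view $\Hopf$ as three fibers of the Hopf fibration $\pi \colon \Sph^3 \to \Sph^2$ over three points $p_A, p_B, p_C \in \Sph^2$, and set $P = \Sph^2 \setminus \{p_A,p_B,p_C\}$. Since $P$ deformation-retracts to a graph, the circle bundle $\pi^{-1}(P) \to P$ is trivial, giving $\Sph^3 \setminus \Hopf = \pi^{-1}(P) \cong P \times \Sph^1 \cong \ut P$, with the $\Sph^1$-fibers of the product corresponding to Hopf fibers.

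Next, I would read off the meridian $\mu_i$ and the $0$-framed longitude $\lambda_i$ of $H^i$ on each boundary torus $\bord\nu(H^i) = \ut\gamma_i$. A meridian of $H^i$ bounds a disk in $\nu(H^i) = \pi^{-1}(D_i)$, where $D_i$ is a small disk in $\Sph^2$ around $p_i$; using the local product structure over $D_i$ this disk can be taken as $D_i \times \{\mathrm{pt}\}$, so $\mu_i$ is represented by $\gamma_i \times \{\mathrm{pt}\}$, which with the correct orientation is precisely $[-z_i]$. Moreover, since two distinct Hopf fibers have linking number $+1$, a nearby Hopf fiber (which is an $\Sph^1$-fiber of the product trivialization) represents the class $[\mu_i] + [\lambda_i]$ on $\bord\nu(H^i)$. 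But this same fiber is the fiber class $[f_i]$ of the tangent bundle, so $[f_i] = [-z_i] + [\lambda_i]$, and combining this with Lemma~\ref{L:Basis} (which gives $[u^+_i] = [f_i] + [z_i]$) forces $[\lambda_i] = [u^+_i]$.

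A more hands-on alternative would be to start directly from the presentation of Lemma~\ref{L:3Chain} and convert it by Kirby calculus: Rolfsen twists on $U^A$ and $U^B$ (sliding each once over $U^C$) produce from the chain $\Chain$ a three-component link in which every pair of components is linked with linking number $+1$, that is, the positive Hopf link $\Hopf$. The induced change of meridian/longitude on each boundary torus is exactly the basis change predicted by Lemma~\ref{L:Basis}: on $\ut\gamma_A$ and $\ut\gamma_B$ the longitude changes from $f$ to $u^+ = f + z$, and on $\ut\gamma_C$ the meridian changes from $f$ to $-z = f - u^+$.

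The main obstacle is orientation bookkeeping. One must verify that, with the chosen conventions on $\Sph^3$, the positively oriented meridian is $[-z_i]$ rather than $[+z_i]$, and that the resulting link is the \emph{positive} Hopf link (every pairwise linking number equal to $+1$ rather than $-1$). Everything else is a homological computation on each boundary torus, reduced to Lemma~\ref{L:Basis} once the identification $\Sph^3 \setminus \Hopf \cong \ut P$ and the class $[f_i] = [\mu_i] + [\lambda_i]$ are established.
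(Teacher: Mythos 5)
Your first approach, via the Hopf fibration, is a genuinely different route from the paper's proof, which starts from the chain complement $\Sph^3\setminus\Chain$ of Lemma~\ref{L:3Chain} and applies a single positive Dehn twist along a disc bounded by~$U^C$. Once $[\mu_i]=-[z_i]$ is in hand, your longitude step (deducing $[\lambda_i]=[u^+_i]$ from $[f_i]=[\mu_i]+[\lambda_i]$, which comes from the linking of two Hopf fibres, combined with Lemma~\ref{L:Basis}) is clean and matches what the twist computation yields. The meridian step, however, conceals a real gap that is not merely a question of sign. A homeomorphism $\pi^{-1}(\PP)\cong\ut\PP$ that sends Hopf fibres to tangent-circle fibres is only determined up to a fibre-preserving gauge transformation, and the gauge group $[\PP,\Sph^1]\cong H^1(\PP;\Z)\cong\Z^2$ acts nontrivially on the boundary framings: it can change the class of $\mu_i$ in $H_1(\ut\gamma_i)$ by $d_i[f_i]$ for any triple $(d_A,d_B,d_C)$ with $d_A+d_B+d_C=0$. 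The assertion that ``$\mu_i$ is represented by $\gamma_i\times\{\mathrm{pt}\}$, which with the correct orientation is precisely $[-z_i]$'' therefore presupposes a particular global trivialization of $\pi^{-1}(\PP)$ under which the local Hopf trivialization over $D_i$ agrees with $-z_i$ for all three $i$ simultaneously; exhibiting such a gauge, and verifying the Euler-number compatibility that makes it possible, is precisely the missing content. The paper sidesteps this by working from the explicit $v$-trivialization of Lemma~\ref{L:3Chain} and computing, via Lemma~\ref{L:Basis}, the transvections the twist induces on each boundary torus.

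Your ``more hands-on alternative'' is essentially the paper's proof in spirit, but the Kirby move you name is not the right one. Handle-sliding $U^A$ and then $U^B$ each once over a $0$-framed parallel copy of $U^C$ produces a link in which the images of $U^A$ and $U^B$ have linking number $2$, not the Hopf link, and a Rolfsen twist is a different move again (it acts on everything passing through a disc, not on one chosen component). What the paper actually performs is a single positive Dehn twist along a disc bounded by $U^C$: this makes $U^A$ and $U^B$ link each other once (turning $\Chain$ into $\Hopf$) and simultaneously performs positive transvections on all three boundary tori. Granting that operation, the framing changes you then record on $\ut\gamma_A$, $\ut\gamma_B$, $\ut\gamma_C$ do agree with the paper's computation.
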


\begin{figure}[ht]
   	\begin{picture}(135,65)(0,0)
      	\put(0,0){\includegraphics*[width=.9\textwidth]{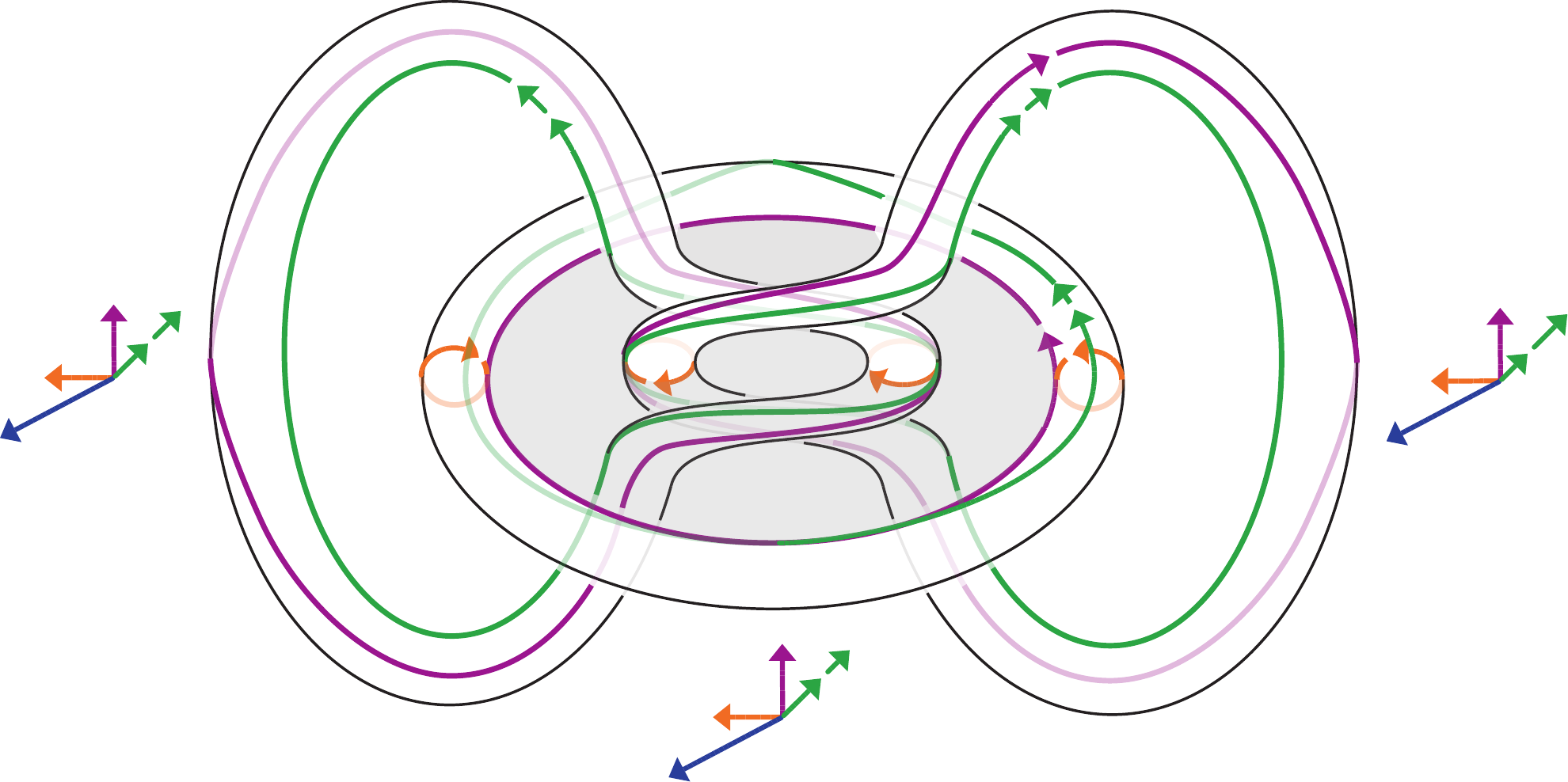}}
	\put(20,60){$H^A$}
	\put(110,60){$H^B$}
	\put(65,55){$H^C$}
	\put(13,42){$f_A$}
	\put(6,43){$u^+_A$}
	\put(-0.5,35){$z_A$}
	\put(-5,29){$u^-_A$}
	\put(134,42){$f_B$}
	\put(126,42.5){$u^+_B$}
	\put(119,35){$z_B$}
	\put(118,26){$u^-_B$}
	\put(74,10){$f_C$}
	\put(61,11){$u^+_C$}
	\put(57,6){$z_C$}
	\put(52,0){$u^-_C$}
    	\end{picture}
   	\caption{\small The unit tangent bundle to a pair of pants~$\ut \PP$ as the complement of a Hopf link. 
	It is obtained from the previous one by a twist along the shaded disc. 
	The vector fields~$f$ (double arrow, green), $z$ (orange) and $u^+$ (purple) for the three boundary tori~$\ut\gamma_A, \ut\gamma_B$ and $\ut\gamma_C$. 
	On all three tori are represented the homology classes of these vector fields and $u^-$, in the $\{$meridian, longitude$\}$-basis. 
	All meridians are of type~$-z$ and all longitudes of type~$u^+$.}
	\label{F:Hopf}
\end{figure} 

\begin{proof}
	Starting from the presentation~$\ut \PP\simeq\Sph^3\setminus\Chain$ of Lemma~\ref{L:3Chain}, we perform a positive twist~$\tau^C$ on the component~$U^C$, 
	that is, we cut~$\Sph^3$ along a disc bounded by~$U^C$ and we glue back after performing one full positive turn along this disc (see Figure~\ref{F:Hopf}).
	We denote by~$H^A, H^B, H^C$ the images of~$U^A, U^B, U^C$ under~$\tau^C$.
	The tori~$H^A, H^B$ as well as the fibers of the points of~$\PP$ all get linked plus one times. 
	On these two tori, the twist performs a positive transvection, adding a meridian to the longitude. 
	Since the meridian of~$U^A$ is represented by the vector field $-z^A$, the meridian of~$H^A$ is also represented by~$-z^A$ and the longitude by~$f^A - (-z^A) = u_+^A$. 
	The same holds on~$H^B$.
	On the torus~$U^C$, the twist performs another transvection, adding a longitude to the meridian. 
	Therefore the longitude of~$H^C$ is represented by~$u_+^C$, as is the longitude of~$U^C$. 
	Its meridian is represented by~$f^C - u_+^C = -z^C$.  
\end{proof}

In the above presentation of~$\ut \PP$ as $\Sph^3\setminus\Hopf$, the fibers of all points of~$\PP$ are fibers of the positive Hopf fibration (see~\cite{Dimensions} for explanations and movies). 

We can now give the desired presentation of~$\utS$. 
Recall that a Dehn filling with slope~$a/b$ on a knot amounts to gluing in a solid torus so that a meridian of the solid torus intersects $a$ longitudes and $b$ meridians of the knot.
With this convention, a Dehn filling with slope $\infty$ amounts to filling the removed knot back in.
Here we add the extra-convention that in that case of infinite slope, we keep the knot removed.

\begin{proposition}
  	\label{P:Surgery}
  	The 3-manifold~$\utS$ can be obtained from $\Sph^3\setminus\Hopf$ by Dehn filling the three components of~$\Hopf$ with respective slopes $p-1,q-1$, and~$r-1$. 
  	Moreover, the fibers of~$\utS$ correspond to the fibers of the Hopf fibration of~$\Sph^3\setminus\Hopf$ and the exceptional fibers to the cores of the Dehn fillings.
\end{proposition}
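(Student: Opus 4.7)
The plan is to build $\utS$ by attaching three solid tori to $\ut\PP$, one per cone point, and to compute the resulting Dehn-filling slopes using Lemma~\ref{L:SurgeryInf}. Decompose $\Spqr = \PP \cup D_A \cup D_B \cup D_C$, where $D_A, D_B, D_C$ are small disk neighbourhoods of the three cone points and $\PP$ is the complementary pair of pants. Passing to unit tangent bundles gives $\utS = \ut\PP \cup \ut D_A \cup \ut D_B \cup \ut D_C$, and each piece $\ut D_i$ is topologically a solid torus whose core is the exceptional fibre over the cone point. Combined with Lemma~\ref{L:SurgeryInf}, this exhibits $\utS$ as $\Sph^3 \setminus \Hopf$ together with three Dehn fillings along the tori $\ut\gamma_A, \ut\gamma_B, \ut\gamma_C$. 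The Seifert-fibration statement is then immediate: regular fibres over $\PP$ are the Hopf fibres by Lemma~\ref{L:SurgeryInf}, and the three filling cores are by construction the exceptional fibres of the Seifert fibration.

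It remains to identify the slopes, and I would carry out the $\Ad$-case in detail, arguing identically at $\Bd$ and $\Cd$. Lift the cone disk $D_A$ to its smooth $p$-fold cover $\tilde D_A$, on which $\Z/p$ acts by rotation of $2\pi/p$. The unit tangent bundle of the cover is the trivial bundle $\tilde D_A \times \Sph^1$, and the deck action is diagonal in polar coordinates: $(\tilde\theta, \tilde\phi) \mapsto (\tilde\theta + 2\pi/p, \tilde\phi + 2\pi/p)$. The key observation is that the disk $\tilde D_A \times \{0\}$ embeds into the quotient $\ut D_A$, because the action moves the second coordinate and no two of its points are identified; hence its boundary represents a meridian of the filling solid torus. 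In the coordinates $(\alpha, \beta) := (\tilde\theta, \tilde\phi - \tilde\theta)$ adapted to the quotient torus, this boundary wraps $p$ times around the $\alpha$-circle and $-1$ times around the $\beta$-circle, which after identifying these generators with the vector fields of the paper (and accounting for the opposite orientations of $\gamma_A$ as $\partial D_A$ versus $\partial\PP$) yields a homology class of the form $-p[z_A] - [f_A]$ in the basis of $\ut\gamma_A$.

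Substituting the identities $[z_A] = -\mu_{H^A}$ and $[f_A] = \mu_{H^A} + \lambda_{H^A}$ supplied by Lemma~\ref{L:SurgeryInf} then rewrites this meridian as $(p-1)\mu_{H^A} - \lambda_{H^A}$, which under the intersection-number convention recalled just before the proposition represents a slope $p-1$ filling on $H^A$. The parallel computations at $\Bd$ and $\Cd$ produce slopes $q-1$ and $r-1$. When $r = \infty$, the disk $D_C$ is replaced by a cusp and no gluing is performed, matching the paper's convention that slope $\infty$ leaves the component removed. The main delicate point, in my view, is the sign and orientation bookkeeping: the opposite orientations of $\gamma_A$ viewed from the two sides, together with the positive transvection from Lemma~\ref{L:SurgeryInf} which shifts the longitude by one meridian, are precisely what convert a naive slope $p+1$ into the correct slope $p-1$.
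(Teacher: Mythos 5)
Your proof is correct and follows essentially the same route as the paper: decompose $\Spqr$ into a pair of pants plus three cone-point discs, invoke Lemma~\ref{L:SurgeryInf}, and compute each filling slope by identifying a meridian disc of $\ut D_p$ with the image of a constant vector field on the $p$-fold covering disc, then reading off its boundary class in the $\{$meridian, longitude$\}$-basis of~$\ut\gamma_A$. The only cosmetic difference is that the paper leaves a sign ambiguity $(p-1,\pm 1)$ that it resolves by a $p=1$ check, whereas your explicit $(\alpha,\beta)$-coordinate computation on the quotient torus fixes the sign directly.
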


\begin{proof}
	Since $\Hy/G_{\infty, \infty, \infty}$ is a pair of pants, Lemma~\ref{L:SurgeryInf} gives the result in this case.
	
	We now suppose $p$ finite (and $q,r$ arbitrary). 
	Let $D_p$ be the quotient of a disc~$D$, by a rotation of order~$p$.
	The orbifold~$\Hy/G_{p, q, r}$ is obtained from $\Hy/G_{\infty, q,r}$ by gluing $\ut D_p$ into the cusp~$\gamma_A$ (corresponding to the fiber of~$\Ad$), formally by first removing a neighbourhood of~$\gamma_A$.
	 
	The unit tangent bundle $\ut D_p$ is a solid torus whose meridian disc corresponds to the image on~$D_p$ 
	of a non-singular vector field on~$D$ (see Figure~\ref{F:Meridian}).
	Such a vector field intersects each non singular fiber $p$ times and intersects a vector field tangent to~$\bord D_p$ once (Figure~\ref{F:Meridian} $(d,e)$). 
	Therefore after gluing, the meridian of $\ut D_p$ is glued to a curve with coordinates~$(p-1,\pm 1)$ in the $\{$meridian, longitude$\}$-basis of~$H_A$.
	The sign can be fixed either by working out explicitly the action of $S^1$ as in \cite{Montesinos}, or simply by checking that in the $p=1$ case the meridian is glued to the curve~$u^+$ as depicted in Figure~\ref{F:Meridian} $(a)$, hence has coordinates $(0,1)$.
	Therefore the gluing corresponds to a Dehn filling of slope~$p-1$.

\begin{figure}[ht]
	\begin{picture}(145,75)(0,0)
	\put(0,0){\includegraphics*[width=\textwidth]{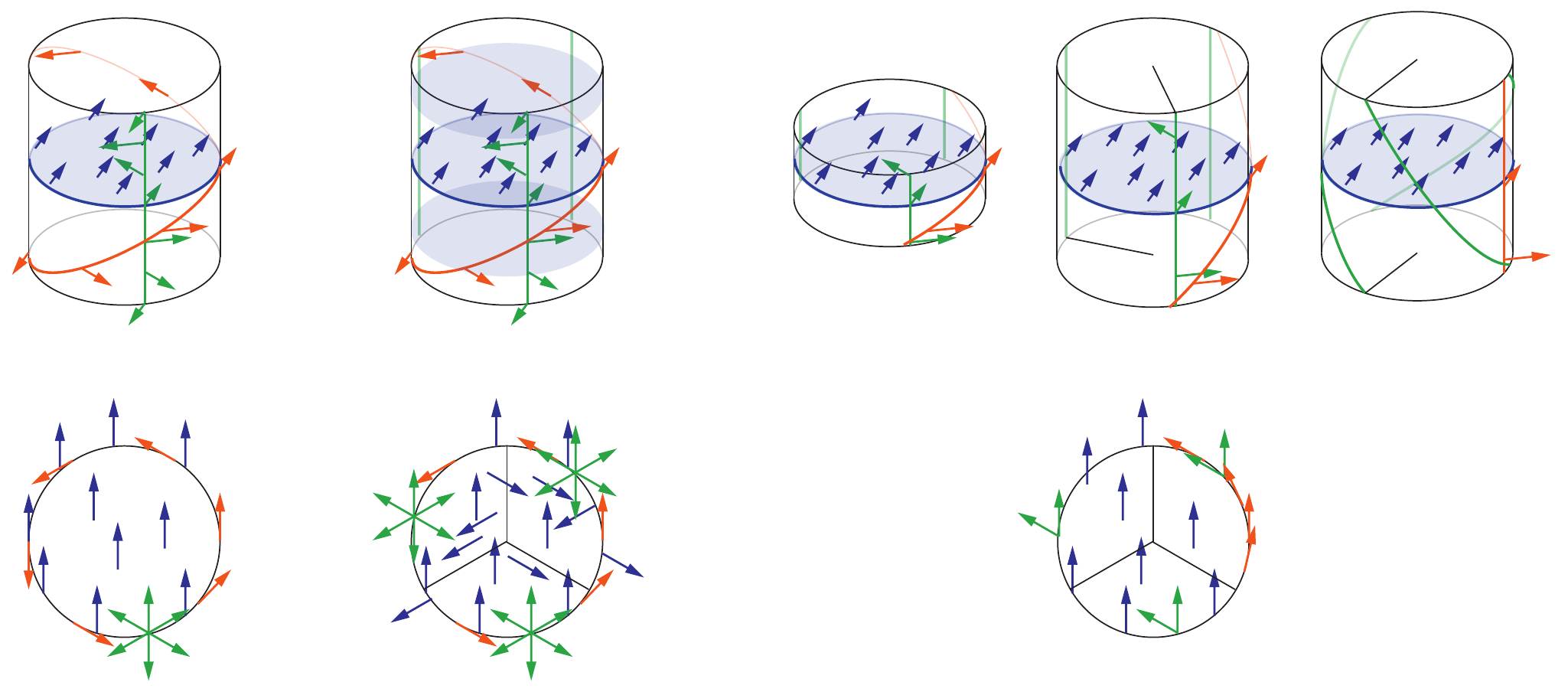}}
	\put(10,68){$(a)$}
	\put(47,68){$(b)$}
	\put(84,68){$(c)$}
	\put(108,68){$(d)$}
	\put(133,68){$(e)$}
	\end{picture}
	\caption{\small $(a)$ The unit tangent bundle~$\ut D$ to a disc~$D$ is a solid torus~$D\times\Sph^1$. 
	The fiber~$\ut\{*\}=f_*$ of a point~$*$ of~$\bord D$ is shown in green, a vector field~$z_{\bord D}$ tangent to~$\bord D$ in orange and a meridian disc given by a constant vector field in blue. 
	When traveling along~$\bord D$ with the orientation induced by the exterior (hence in the clockwise direction), 
	the blue vector field has index~$+1$, that is, it rotates to the left. 
	It is then isotopic to~$u^+_{\bord D}$. 
	$(b)$ the action of an order~$3$ rotation on~$\ut D$: the vector field~$z_{\bord D}$ is invariant, 
	while the meridian disc (in blue) is rotated in the fiber direction, and the fiber~$f_*$ (in green) is rotated around~$D$. 
	$(c)$ the quotient of~$\ut D$ by an order~$3$ rotation: 
	a fundamental domain is given by a horizontal slice of the full torus made of those tangent vectors that point between 9 o'clock and 1 o'clock, say. 
	$(d)$ the same picture expanded in the fiber direction. The top and bottom faces are identified using an order~$3$ rotation. 
	$(e)$ after a twist of $-1/3$ of a turn, $\ut D_3$ is a solid torus with the standard identification of the opposite faces of a cylinder. 
	The meridian disc is still given by the blue vector field. 
	It intersects the fiber of a point (in green) 3 times , and the vector field tangent to~$\bord D_3$ (in orange) minus one times.}
	\label{F:Meridian}
\end{figure}

	The cases of $q$ and $r$ are treated similarly.
\end{proof}

Note that, if we had not perform the negative Dehn twist in Lemma~\ref{L:SurgeryInf}, we would obtain the complement of a chain of three unknots instead of the Hopf link, as depicted on Figure~\ref{F:3Chain}.
In this case, a similar statement to Proposition~\ref{P:Surgery} holds by replacing the surgery coeffecients by $p, q$, and $1{-}1/r$ respectively (again, the change of coefficients is coherent with Rolfsen's move~\cite[p.267]{Rolfsen}).


\section{Coding of the geodesic flow}
	\label{S:Coding}

The geodesic flow~$\phi$ on~$\ut\Hy\simeq \Hy\times\Sph^1$ is defined in the following way: 
every unit tangent vector to~$\Hy$ is of the form~$(\gamma(0), \dot\gamma(0))$ where $\gamma$ is a geodesic traveled at speed~$1$, we then set $\phi^t(\gamma(0), \dot\gamma(0)) = (\gamma(t), \dot\gamma(t))$. 
For $G$ a Fuchsian group, this definition is $G$-equivariant, and the geodesic flow then projects on~$\ut\Hy/G$.

These flows are the oldest known example of Anosov flows~\cite{Anosov}.
Their hyperbolic character implies the existence of a Markov partition~\cite{Ratner}, that is, of a decomposition of the flow into flow boxes whose entry and exit faces glue nicely. 
However, describing such a coding for explicit groups (for example for surface or triangular groups) is not so easy. 
This problem has a long history, see among others~\cite{MH,BS, Series81, AF, CN, Katok, Pit} and the references therein. 

Here we adapt to our needs the coding given by Caroline Series~\cite{Series81} and completed by J\'er\^ome Los~\cite{Los} (who covers some cases that were left out by Series).
The main idea in these codings is to distort geodesics in~$\Hy$ on a planar Cayley graph of~$G$.
Here we do not even need a Cayley graph, but only a planar graph that is $G$-invariant. 
In this context, all properties of Series-Los' coding still hold.
Our present generalisation works for general Fuchsian groups, but since stating the construction in full generality would make the notations heavier, we only state it in the case we are interested in.

We start with the triangle $\Ao\Bo\Co$ in~$\Hy$ and the group~$\Gpqr$ as before. 
We denote by~$\grpqr$ the embedded graph in~$\Hy$ whose vertices are the images of~$\Ao$ and $\Bo$ by~$\Gpqr$, and whose edges are the images of the segment~$(\Ao\Bo)$ by~$\Gpqr$ (see Figure~\ref{F:Grpqr}).
All vertices of~$\grpqr$ have degree~$p$ or~$q$. 
All components of the complement~$\Hy\setminus\grpqr$ are polygons with $2r$ vertices, half of which are in the $\Gpqr$-orbit of~$A_0$ and half of which in the orbit of~$B_0$.

\begin{definition}[see Figure~\ref{F:Grpqr}]
 	\label{D:coding}
  	Assume that $\gamma:=(B^0, A^1, B^2, \dots)$ is a semi-infinite path in~$\grpqr$.
  	The \emph{code} of~$\gamma$ is the semi-infinite word~$w(\gamma)$ defined as~$a^{k_1}b^{k_2}a^{k_3}b^{k_4}\dots$, 
	where $+2\pi k_{2i+1}/p$ (\emph{resp.} $-2\pi k_{2i}/q$) is the value of the angle $\widehat{B^{2i}A^{2i+1}B^{2i+2}}$ (\emph{resp.} $\widehat{A^{2i-1}B^{2i}A^{2i+1}}$). 
  	The code of a path starting in the orbit of~$\Ao$ is defined similarly, but starts with a power of the letter~$b$. 
  	The code of a bi-infinite path $(\dots, A^{-1}, B^0, A^1, B^2, \dots)$ is the bi-infinite word $\dots a^{k_{-1}}b^{k_0}a^{k_1}b^{k_2}\dots$ defined in the same way.
\end{definition}

Beware the choice of the opposite orientations for $a$ and $b$. 
We denote by $\lex$ and $\slex$ the lexicographic ordering on words on the alphabet~$\{a, b\}$.
This choice ensures that the lexicographic ordering on codes and the clockwise ordering on~$\bord\Hy$  coincide (Lemma~\ref{L:Ordering}). 
Note that two paths obtained one from the other by an element of~$\Gpqr$ have the same code.
Following an edge corresponds to suppressing the first block of $a$ of $b$ in the code of a path. 
We then define the \emph{super-shift} operator~$\sshift$ on infinite words that suppresses, not only the first letter of a word but, the first block of similar letters of a word.

It is easy to see that every geodesic in~$\Hy$ is quasi-isometric to a path in~$\grpqr$, but the latter is not unique. 
The words~$u_L, u_R, v_L, v_R$, $w_L, w_R$ being given by Table~\ref{Table} and~2, the goal of this section is to prove Theorem~\ref{P:Coding} of the introduction which states that for (almost) every closed geodesic there is a way to choose a unique code.


\begin{figure}[ht]
  	\begin{picture}(120,120)(0,0)
      		\put(0,0){\includegraphics*[width=.8\textwidth]{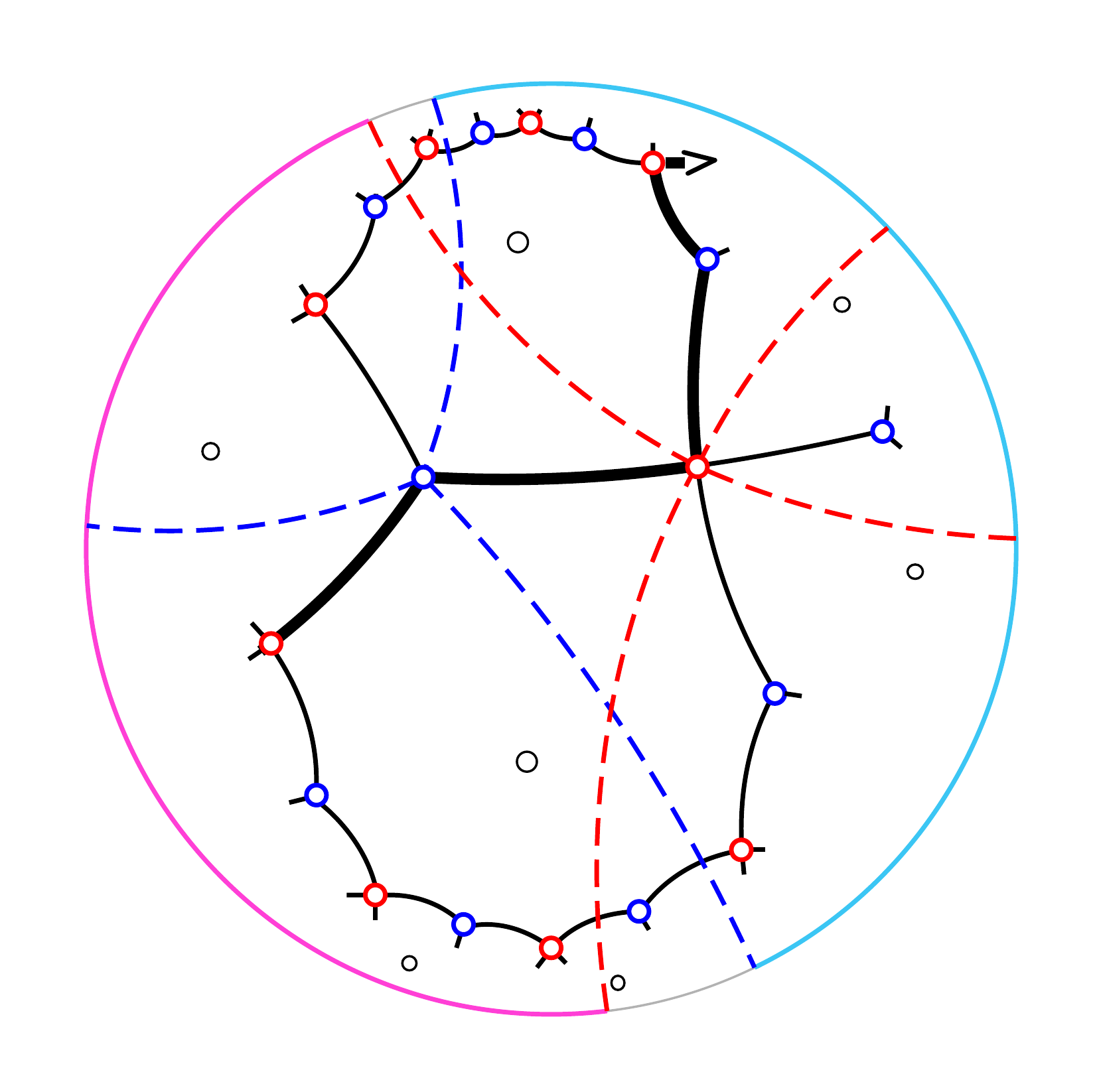}}
		\put(44,62){$\Ao$}
		\put(77,63.5){$\Bo$}
		\put(55,88.5){$\Co$}
		\put(112,68){$I_{\Ao\to\Bo}$}
		\put(-3,50){$I_{\Bo\to\Ao}$}
  	\end{picture}
  	\caption{\small The graph~$\grpqr$ in the case~$p=3, q=4, r=5$. In bold is a path through the graph whose code starts with $aba^2b^3$. 
	A choice of a base pair of spectacles $P=\{I_{\Ao\to\Bo}, I_{\Bo\to\Ao}\}\subset\partial\Hy$ is also depicted.}
  	\label{F:Grpqr}
\end{figure}

\subsection{Pairs of spectacles and admissible paths}
	\label{S:Codes}

\begin{definition}
	\label{D:Pair}
	A \emph{base pair of spectacles} is a pair of disjoint semi-open intervals $\{I_{\Ao\to\Bo}$, $I_{\Bo\to\Ao}\}$ in~$\bord\Hy$, 
	such that all geodesic rays starting at~$\Ao$ and reaching $I_{\Ao\to\Bo}$ span an angle~$2\pi/p$ from~$\Ao$,  and
	all geodesic rays starting at~$\Bo$ and reaching $I_{\Bo\to\Ao}$ span an angle~$2\pi/q$ from~$\Bo$.
	For every oriented edge~$(A,B)$ of~$\grpqr$, the \emph{associated spectacle} $I_{A\to B}$ is the image of $I_{\Ao\to\Bo}$ 
	by the unique element of $\Gpqr$ that maps $(\Ao, \Bo)$ onto $(A,B)$. 
	The spectacles associated to an oriented edge~$(B,A)$ is defined similarly.
\end{definition}

\begin{remark}
It will usually make sense to choose the base pair of spectacles so that the geodesic rays from $\Ao$ to $I_{\Ao\to\Bo}$ include the ray passing through $\Bo$, and this will be the case for our choice of spectacles in the next section. However, we do not require this in the definition. 
\end{remark}

Deciding whether the intervals are open on the left or on the right is not important, but in order to avoid heavy case specifications, we arbitrarily decide that all intervals on~$\bord\Hy$ are open on the right and closed on the left.

Since $I_{\Ao\to\Bo}$ spans an angle~$2\pi/p$ from~$\Ao$, the $p$ spectacles associated to the $p$ edges starting at a point in the orbit of~$\Ao$ tesselate~$\bord\Hy$ (and so do the $q$ spectacles associated to the edges starting at a point in the orbit of~$B_0$), for any choice of base pair of spectacles.

\begin{definition}
  	\label{D:Admissible}
  	Given a base pair of spectacles~$\Spec$, a semi-infinite or bi-infinite  path~$\gamma$ in~$\grpqr$ is \emph{$\Spec$-admissible} 
	if it has a limit in~$\bord\Hy$, say~$\xi$, and if $\xi$ belongs to all spectacles associated to all oriented edges of~$\gamma$.
\end{definition}

Given a pair of spectacles~$\Spec$ and a point~$\xi$ on~$\bord \Hy$, it is not clear whether there exists an $\Spec$-admissible semi-infinite path connecting~$\Ao$ (say) to~$\xi$. 
What is easy to check is that if there is such an admissible path, it is unique.

\begin{lemma}
  	\label{L:Uniqueness}
  	Given a pair of spectacles~$\Spec$, for every $A$ in~$\grpqr$ and every $\xi$ in~$\bord\Hy$, 
	there exists at most one $\Spec$-admissible path in~$\grpqr$ joining~$A$ to~$\xi$. 
\end{lemma}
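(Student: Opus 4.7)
The plan is to determine each successive vertex of an admissible path inductively, using the tessellation property of spectacles noted just before the lemma. The key input is that at any vertex $V$ of~$\grpqr$, the collection of spectacles $\{I_{V\to V'} : V' \text{ is a neighbour of } V\}$ forms a partition of~$\bord\Hy$ into pairwise disjoint semi-open intervals. For $V$ in the orbit of~$\Ao$ this holds because there are $p$ such spectacles, each spanning an angle $2\pi/p$ from~$V$, together covering the full circle; by $\Gpqr$-equivariance it reduces to the base case $V=\Ao$, which is part of Definition~\ref{D:Pair}. The half-open convention (closed on the left, open on the right) ensures pairwise disjointness, and the total angle being~$2\pi$ ensures the partition covers all of~$\bord\Hy$. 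The analogous statement holds for vertices in the orbit of~$\Bo$ with $q$ replacing $p$.

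Given this, suppose two $\Spec$-admissible paths $\gamma = (A = V_0, V_1, V_2, \dots)$ and $\gamma' = (A = V_0, V_1', V_2', \dots)$ in~$\grpqr$ both have limit~$\xi$. I would show $V_n = V_n'$ by induction on~$n$. The case $n=0$ holds by hypothesis. For the inductive step, assume $V_n = V_n' =: V$. By admissibility, $\xi \in I_{V \to V_{n+1}}$ and $\xi \in I_{V \to V_{n+1}'}$. By the tessellation property at~$V$, the point $\xi$ lies in exactly one spectacle among those associated to edges out of~$V$, so $I_{V \to V_{n+1}} = I_{V \to V_{n+1}'}$. Since distinct neighbours of~$V$ give distinct (in fact disjoint) spectacles, this forces $V_{n+1} = V_{n+1}'$, completing the induction.

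I do not expect a serious obstacle here: the argument is essentially the observation that an admissible path is forced at each step by the condition $\xi \in I_{V_n \to V_{n+1}}$. The only mildly delicate point is to make the partition property completely rigorous (checking that the $p$, respectively $q$, open-closed intervals indeed tile $\bord\Hy$ without overlap and without gap), but this follows directly from the total angle at a vertex being~$2\pi$ and from the chosen open/closed convention, together with $\Gpqr$-equivariance of the spectacle assignment $(A,B) \mapsto I_{A\to B}$.
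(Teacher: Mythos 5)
Your proof is correct and takes essentially the same approach as the paper: both arguments rely on the tessellation property of spectacles at each vertex to conclude that the next edge of an admissible path is uniquely determined by the requirement that its spectacle contain $\xi$, and then proceed by induction along the path. Your write-up simply makes the induction and the partition property more explicit than the paper's brief version.
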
 

\begin{proof}
	The path can be constructed inductively: if the path begins by $(A^0, B^1, \dots, A^{2i})$ (\emph{resp.} $(A^0, B^1, \dots, B^{2i+1})$), 
	there is a unique oriented edge starting at~$A^{2i}$ (\emph{resp.} $B^{2i+1}$) whose associated spectacle contains~$\xi$. 
	This gives a unique choice for~$B^{2i+1}$ (\emph{resp.} $A^{2i+2}$). 
	Note that it is not clear that the path so defined converges to~$\xi$.
\end{proof}

A nice feature of the coding with two letters given by Definition~\ref{D:coding} is that the natural cyclic ordering on admissible paths starting at a given point is reflected in the lexicographic order of the codes.

\begin{lemma}
  	\label{L:Ordering}
  	Let $\Spec$ be a base pair of spectacles. 
  	Assume that $\gamma, \gamma'$ are two $\Spec$-admissible paths in~$\grpqr$ that begin with the same edge. 
  	Then $\gamma$ is to the left of~$\gamma'$ after they diverge if and only if $w(\gamma)\slex w(\gamma')$ holds.
\end{lemma}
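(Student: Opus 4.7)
The plan is to localize the analysis to the first vertex~$V$ at which the outgoing edges of $\gamma$ and $\gamma'$ differ; since $\gamma$ and $\gamma'$ share their first edge by hypothesis, such a $V$ is well-defined (otherwise the paths coincide and there is nothing to prove). Let $\alpha$ denote the common prefix of the two codes recording the turns at the common vertices strictly before~$V$. By the alternating structure of paths in $\grpqr$, $V$ lies in the $\Gpqr$-orbit of $\Ao$ or of $\Bo$; I treat the first case, the second being entirely analogous. One then has $w(\gamma) = \alpha\, a^k\, \tau$ and $w(\gamma') = \alpha\, a^{k'}\, \tau'$ with $k, k' \in \{1, \dots, p-1\}$ distinct and both tails $\tau, \tau'$ beginning with a non-empty block of~$b$'s. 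Up to exchanging the roles of $\gamma$ and $\gamma'$, I may assume $k > k'$ and prove simultaneously that $w(\gamma) \slex w(\gamma')$ and that $\gamma$ lies to the left of $\gamma'$ after divergence; the reverse implication follows by symmetry.

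For the lexicographic comparison, the codes agree on the first $|\alpha| + k'$ letters; at position $|\alpha| + k' + 1$, the word $w(\gamma)$ still contains the letter $a$, while $w(\gamma')$ has moved on to the initial $b$ of~$\tau'$. As $a <^{\mathrm{lex}} b$ by convention, this immediately gives $w(\gamma) \slex w(\gamma')$.

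For the geometric comparison, the stabilizer of~$V$ in $\Gpqr$ is a cyclic group of order $p$ generated by a rotation that preserves $\grpqr$, so the $p$ edges of $\grpqr$ incident to~$V$ are equiangularly spaced. Measuring angles counterclockwise from the common back-edge (the shared incoming edge to~$V$), Definition~\ref{D:coding} places the outgoing edges of $\gamma$ and $\gamma'$ at angles $2\pi k/p$ and $2\pi k'/p$ respectively. Since $k > k'$, $\gamma$'s outgoing edge is obtained from the back-edge by a strictly greater counterclockwise rotation, which, for an observer arriving at~$V$ along the back-edge, places it strictly to the left of $\gamma'$'s outgoing edge. This is exactly the statement that $\gamma$ is to the left of $\gamma'$ immediately after divergence.

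The $B$-orbit case proceeds identically, using the sign convention $-2\pi k/q$ of Definition~\ref{D:coding}: the angular ordering of the two outgoing edges then reverses (a smaller $k$ now produces the more-leftward edge), but the lexicographic comparison reverses simultaneously (in $b^k a \dots$ versus $b^{k'} a \dots$, a smaller exponent makes the next letter $a$ appear sooner, hence gives a lex-smaller word). The two sign flips cancel and the equivalence is preserved. The main delicate point of the proof is precisely this double bookkeeping of orientations: that the $A$-case and $B$-case yield the same equivalence is not a coincidence but the very reason for the ``opposite orientations'' convention for $a$ and $b$ announced just after Definition~\ref{D:coding}.
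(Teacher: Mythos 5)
Your proof is correct and takes essentially the same approach as the paper's. The paper's proof is terser: it states the key observation (the cyclic order of the $p-1$ or $q-1$ possible outgoing edges coincides with the lexicographic order of codes, because $a$ is a $+2\pi/p$ rotation and $b$ a $-2\pi/q$ rotation), gives two prototypical comparisons ($ab$ vs.\ $a^2b$, $ba$ vs.\ $b^2a$), and concludes that at the first divergence the leftmost path has the smaller code. Your version spells out the same idea with more bookkeeping: you locate the first divergent vertex~$V$, do the position-by-position lexicographic comparison explicitly, translate the exponent~$k$ into a counterclockwise rotation from the shared back-edge, and verify both the $A$- and $B$-vertex cases, noting that the two sign flips cancel. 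That last observation is exactly what the paper flags in its remark after Definition~\ref{D:coding}, where it says the opposite orientations for $a$ and $b$ are chosen so that the lexicographic ordering on codes and the clockwise ordering on~$\bord\Hy$ coincide. No gaps.
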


\begin{proof}
 	The main observation is that, after taking any edge of~$\grpqr$, the cyclic ordering on the $p-1$ or $q-1$ possible next edges coincide with the lexicographic ordering. 
  	Indeed, since $a$ corresponds to a rotation of angle~$+\frac {2\pi} p$, 
	a path whose code begins with $ab$ is on the right of a path whose code begins with $a^2b$, and so on, and the leftmost paths have codes beginning with $a^{p-1}b$. 
  	Similarly, since $b$ corresponds to a rotation of angle~$-\frac {2\pi} q$, 
	a path whose code begins with $ba$ is on the left of a path whose code begins with $b^2a$, and so on.
  	Therefore, two paths coincide as long as their codes coincide. 
  	As soon as they diverge, the leftmost has the smallest code.
\end{proof}


\subsection{Accurate pairs of spectacles and existence of admissible paths}
  	\label{S:GoodSpectacles}

We have seen how a choice of a base pair of spectacles yields a coding for every path in~$\grpqr$, and leads to the notion of admissible infinite path. 
Now we want to find spectacles that ensure that every bi-infinite geodesic in~$\Hy$ can be shadowed by an admissible bi-infinite path.
This is the notion of~\emph{accurate spectacles}. 

\begin{definition}
	\label{D:Good}
	A base pair of spectacles~$\Spec$ is said to be~\emph{accurate} if for every~$A$ in~$\Hy$ and for every $\eta, \xi$ in~$\bord\Hy$ 
	there exist an $\Spec$-admissible semi-infinite path connecting $A$ to~$\xi$ and an $\Spec$-admissible bi-infinite path connecting~$\eta$ to~$\xi$. 
	These paths are denoted by~$\gamma^\Spec_{A\to\xi}$ and~$\gamma^\Spec_{\eta\to \xi}$ respectively.
\end{definition}

Let us see which codes are yielded by accurate pairs of spectacles.

\begin{definition}
	\label{D:KneadingSequences}
	Assume that~$\Spec=\{I_{\Ao\to\Bo}, I_{\Bo\to\Ao}\}$ is an accurate pair of spectacles. 
	Denote by $u^\Spec_L$ and $u^\Spec_R$ the respective codes of the admissible paths connecting~$\Ao$ to the left and right extremities of~$I_{\Ao\to\Bo}$, 
	and by $v^\Spec_L$ and $v^\Spec_R$ the codes of admissible the paths connecting~$\Bo$ to the two extremities of~$I_{\Bo\to\Ao}$. 
	The sequences~$u^\Spec_L, u^\Spec_R, v^\Spec_L, v^\Spec_R$ are called the \emph{kneading sequences} associated to the pair of spectacles~$\Spec$.
\end{definition}

\begin{lemma}
	\label{L:Admissible}
	Given an accurate pair of spectacles~$\Spec$, a semi-infinite word~$w=b^{k_{1}}a^{k_2}b^{k_{3}}\dots$ is the code of an $\Spec$-admissible path 
	if and only if it satisfies $u^\Spec_L \lex \sshift^{2i+1}(w) \slex u^\Spec_R$ and $v^\Spec_L \lex \sshift^{2i}(w) \slex v^\Spec_R$ for every~$i$.
\end{lemma}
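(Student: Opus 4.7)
The plan hinges on Lemma~\ref{L:Ordering}, which links the lexicographic ordering on codes to the clockwise ordering on~$\bord\Hy$ among paths sharing an initial edge, together with the $\Gpqr$-invariance of the coding.

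For the necessity direction, I would assume $w = w(\gamma)$ for some $\Spec$-admissible path~$\gamma$ with limit $\xi \in \bord\Hy$, and look at an arbitrary shift $\sshift^j(w)$. This shift is the code of the tail of~$\gamma$ starting at the $j$-th vertex~$V^j$, which is itself $\Spec$-admissible and still has limit~$\xi$. Using the transitivity of $\Gpqr$ on oriented edges of~$\grpqr$, I move this tail by an element $g \in \Gpqr$ so that its initial edge becomes $(\Ao,\Bo)$ or $(\Bo,\Ao)$, according to whether~$V^j$ lies in the orbit of~$\Ao$ or of~$\Bo$. The coding is unchanged, and admissibility at the initial vertex of the translated tail forces $g\cdot\xi \in I_{\Ao\to\Bo}$ (respectively~$I_{\Bo\to\Ao}$). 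By Definition~\ref{D:KneadingSequences}, the two extremities of this spectacle are reached along the admissible paths with codes $u^\Spec_L, u^\Spec_R$ (respectively~$v^\Spec_L, v^\Spec_R$), and Lemma~\ref{L:Ordering} converts the betweenness of $g\cdot\xi$ among these extremities into the desired lexicographic inequalities, with the half-open convention on spectacles matching the $\lex$ versus $\slex$ convention in the statement.

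For the sufficiency direction, given a word~$w$ satisfying all the inequalities, I would inductively build a path~$\gamma$ starting at~$\Ao$: the $j$-th block of~$w$ prescribes the turning angle at the $(j{+}1)$-th vertex, which together with the previous edge determines the next vertex; the very first edge out of~$\Ao$ is pinned down by requiring the shifted codes to land in the spectacle windows dictated by the inequalities. By construction $w(\gamma)=w$. Each block having exponent at least one, consecutive edges of~$\gamma$ make genuine turns, so $\gamma$ is a quasi-geodesic in~$\Hy$ and converges to a unique point $\xi \in \bord\Hy$. The inequalities then, via Lemma~\ref{L:Ordering} read in reverse, force $\xi$ to lie inside the spectacle~$I_{V^j\to V^{j+1}}$ at every step, proving admissibility.

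The main obstacle will be the convergence argument in the sufficiency direction: one must show both that the nested spectacles $I_{V^j\to V^{j+1}}$ associated to successive edges of~$\gamma$ shrink to a single boundary point, and that this boundary point coincides with the limit of~$\gamma$ viewed as a quasi-geodesic in~$\Hy$. Accuracy of~$\Spec$ supplies the first fact---any infinite admissible path indexes a decreasing family of spectacles that must collapse to a point on~$\bord\Hy$---while comparing the partial sums of the turning angles to the geodesic drift in~$\Hy$ gives the second.
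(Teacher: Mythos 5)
Your proposal takes essentially the same route as the paper: the necessity direction is the paper's invocation of Lemma~\ref{L:Ordering} (the paper works directly at the vertex $A^{2i}$ rather than first translating the tail back to the base edge by an element of~$\Gpqr$, but that is the same computation), and the sufficiency direction likewise builds the path from the word, takes its boundary limit~$\xi$, and deduces admissibility from the inequalities via Lemma~\ref{L:Ordering}, which is what the paper's citation of Lemma~\ref{L:Uniqueness} amounts to. One small correction to your ``main obstacle'' paragraph: the spectacles $I_{V^j\to V^{j+1}}$ are not nested and do not shrink to a point (for a path covering a periodic geodesic only finitely many such intervals occur, each visited infinitely often); what one actually wants is that the path converges to some $\xi\in\bord\Hy$ and that $\xi$ lies in each $I_{V^j\to V^{j+1}}$, which your final sentence does recover from Lemma~\ref{L:Ordering} and accuracy, and which is all the paper itself asserts.
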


\begin{proof}
	Let $\xi$ be a point~$\bord\Hy$. 
	By definition of accurate spectacles, there exists an $\Spec$-admissible path~$\gamma:=(A^0,B^1, A^2, B^3,\dots)$ 
	that connects~$\Ao$ to~$\xi$ and by Lemma~\ref{L:Uniqueness} this $\Spec$-admissible path is unique. 
	Denote its code by~$w$. 
	For every~$i$ the code of the path~$(A^{2i},B^{2i+1},\dots)$ is the word~$\sshift^{2i}(w)$. 
	Since $\xi$ belongs the the interval~$I_{A^{2i}\to B^{2i+1}}$, by Lemma~\ref{L:Ordering}, we have $v^\Spec_L \lex \sshift^{2i}(w) \slex v^\Spec_R$.
	Similarly, since $\xi$ belongs the the interval~$I_{B^{2i+1}\to A^{2i+2}}$, we have $u^\Spec_L \lex \sshift^{2i+1}(w) \slex u^\Spec_R$.

	Conversely, if~$w$ satisfies the above constraints, denote the associated path by $\gamma$ and the limit of~$\gamma$ in~$\bord\Hy$ by~$\xi$. 
	Lemma~\ref{L:Uniqueness} implies that $\xi$ belongs to all spectacles~$I_{A^{2i}\to B^{2i+1}}$ and~$I_{B^{2i+1}\to A^{2i+2}}$. 
	This means exactly that $w$ is $\Spec$-admissible.
\end{proof}

By the same argument, we immediately get

\begin{lemma}
	\label{L:BiAdmissible}
	Given an accurate pair of spectacles~$\Spec$, a bi-infinite word~$w=\dots b^{k_{-1}}a^{k_0}b^{k_{1}}\dots$ is the code of an $\Spec$-admissible path 
	if and only if it satisfies $u^\Spec_L \lex \sshift^{2i+1}(w) \slex u^\Spec_R$ and $v^\Spec_L \lex \sshift^{2i}(w) \slex v^\Spec_R$ for every~$i$.
\end{lemma}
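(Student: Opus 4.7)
My plan is to derive Lemma~\ref{L:BiAdmissible} directly from Lemma~\ref{L:Admissible}, by splitting a bi-infinite path at an arbitrary vertex and applying the semi-infinite statement to each of the two resulting halves.

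For the direct implication, I would suppose $w$ is the code of an $\Spec$-admissible bi-infinite path $\gamma = (\dots, A^{-1}, B^0, A^1, B^2, \dots)$ with forward limit~$\xi$. For each integer $i$, the forward tail of $\gamma$ starting at the vertex at position $i$ has all its oriented edges among those of $\gamma$, so it is itself $\Spec$-admissible with the same forward limit $\xi$. Depending on the parity of $i$, Lemma~\ref{L:Admissible}, or its evident analog for tails starting at a vertex in the $\Gpqr$-orbit of $\Ao$ obtained by swapping the roles of $a$ and $b$, yields the desired inequality for $\sshift^i(w)$. Letting $i$ range over $\Z$ covers all the required shifts.

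For the converse, assume $w$ satisfies the inequalities and let $\gamma$ be the bi-infinite path in $\grpqr$ whose successive edges are prescribed by the blocks of $w$. I would split $\gamma$ at the vertex $B^0$ into a forward and a backward semi-infinite half. The forward half has code equal to the forward portion of $w$, whose shifts satisfy the hypotheses of Lemma~\ref{L:Admissible}; hence this half is $\Spec$-admissible, with a forward limit $\xi \in \bord\Hy$ lying in every spectacle associated to a forward-travelled edge. The backward half, traversed from $B^0$ in the reverse direction, is again a semi-infinite path in $\grpqr$, and the same reasoning applied in reverse time produces a backward limit $\eta \in \bord\Hy$ lying in every backward spectacle. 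Hence $\gamma$ is $\Spec$-admissible.

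The only real subtlety I anticipate is in the backward half of the converse: the notions of code, spectacle, and shift are all defined with a built-in forward orientation, so one must first observe that Lemma~\ref{L:Uniqueness} and Lemma~\ref{L:Ordering} each admit obvious reverse-time counterparts (swap left and right, and reverse the orientation of each edge in the cyclic-ordering argument). Once these symmetric versions are in hand, the argument is just a re-indexing of the proof of Lemma~\ref{L:Admissible}, with no new combinatorial ingredient.
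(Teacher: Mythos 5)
Your direct implication is fine and matches what the paper intends: every forward tail of an $\Spec$-admissible bi-infinite path is itself $\Spec$-admissible with the same forward limit $\xi$, and applying Lemma~\ref{L:Admissible} to each tail yields the inequality for each shift.

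The converse, however, has a genuine gap. Recall Definition~\ref{D:Admissible}: a bi-infinite path $\gamma$ is $\Spec$-admissible when its \emph{forward} limit $\xi$ belongs to the spectacle $I_{A\to B}$ of \emph{every} forward-oriented edge $(A,B)$ of $\gamma$, including the edges occurring before the vertex $B^0$. The hypotheses $v^\Spec_L\lex\sshift^{2i}(w)\slex v^\Spec_R$ for negative $i$ are constraints on the forward tails of $\gamma$ starting at vertices \emph{before} $B^0$; they are not constraints on any backward traversal. Your argument for the backward half produces a backward limit $\eta$ lying in what you call \emph{backward spectacles}, but that is an entirely different condition from the one admissibility requires. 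Nowhere do you show that $\xi$ itself lies in $I_{A^{2i}\to B^{2i+1}}$ or $I_{B^{2i+1}\to A^{2i+2}}$ for $i<0$, and no reverse-time analogue of Lemma~\ref{L:Uniqueness} or Lemma~\ref{L:Ordering} will give you that, because those lemmas would be statements about $\eta$, not about $\xi$.

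The correct argument, and the one the paper is gesturing at with ``by the same argument,'' stays entirely in forward time: for each $i\in\Z$, the forward tail of $\gamma$ starting at position $2i$ (resp.\ $2i+1$) has code $\sshift^{2i}(w)$ (resp.\ $\sshift^{2i+1}(w)$), which by hypothesis satisfies the inequalities of Lemma~\ref{L:Admissible}; hence each such tail is $\Spec$-admissible. All these tails are eventually equal, so they share a common forward limit $\xi$, and admissibility of each tail says exactly that $\xi$ lies in the spectacle of every edge of that tail. Letting $i$ range over all of $\Z$ covers every edge of $\gamma$, so $\gamma$ is $\Spec$-admissible. No splitting at $B^0$ and no reverse-time versions of the earlier lemmas are needed.
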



\subsection{An explicit choice of spectacles}
\label{S:Explicit}
 
Accurate pairs of spectacles yield codes that are easy to describe. 
However, it is not obvious that there exist accurate pairs of spectacles.
For example, if the spectacles~$I_{\Ao\to\Bo}$ do not include the endpoint of the geodesic ray from $\Ao$ through $\Bo$, then an admissible path~$\gamma^\Spec_{A\to\xi}$ would veer away from~$\xi$ instead of converging toward~$\xi$. 
As another example, note that when $r$ is infinite, there exists a unique pair of spectacles. 
Indeed in this case the graph~$\grpqr$ is a tree, and the only possibility for the intervals~$I_{\Ao\to\Bo}$ and~$I_{\Bo\to\Ao}$ is to be the two intervals that connect the extremities of the two horoballs meeting on the edge~$(\Ao\Bo)$.

The goal of this section is to construct explicitly an accurate pair of spectacles in the case where $r$ is finite.

For the rest of this section, assume $p>2$.
The case $p=2$ will be treated in Section~\ref{S:p=2}.
Assume  $F$ is a region of the complement of~$\grpqr$ in~$\Hy$. 
Since $F$ has $2r$ vertices, it makes sense to say that two edges are \emph{opposite} in~$F$: if there are $r-1$~edges between them in both directions.

\begin{definition}[see Figures~\ref{F:bigon} and \ref{F:Borders}]
  	\label{D:BigonChain}
  	Assume that $A,B$ are two adjacent vertices of~$\grpqr$. 
  	Then the associated \emph{bigon}~$\bigon_{AB}$ is defined as the infinite sequence $(F^1, F^2, \dots)$ of faces of the complement of~$\grpqr$, 
	where $F^1$ is the face on the left of the edge~$(AB)$, the edge $F^1\cap F^2$ is opposite to $(AB)$ in $F^1$, 
	and $F^{i}\cap F^{i+1}$ is opposite to $F^{i-1}\cap F^{i}$ in $F^i$ for every~$i\ge 2$.

  	The bigon~$\bigon_{AB}$ has a left and a right boundary starting at $A$ and $B$ respectively.
  	They both converge to the same point in~$\bord\Hy$ that we call the \emph{normal extremity} of the oriented edge~$(AB)$ and denote by~$\xi_{AB}$.
\end{definition}

\begin{figure}[ht]
  	\begin{picture}(117,120)(0,0)
      		\put(0,0){\includegraphics*[width=.75\textwidth]{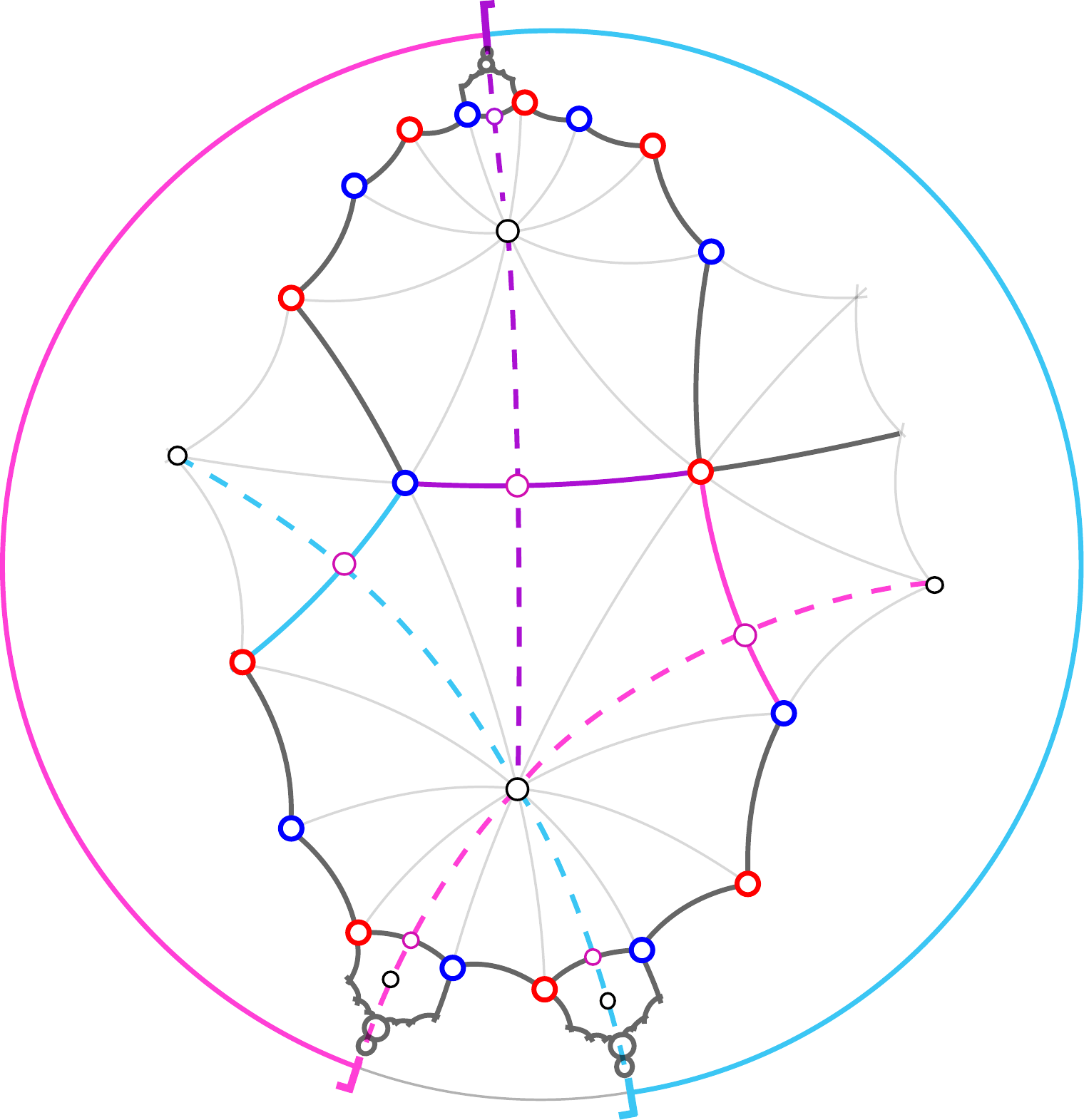}}
		\put(47,118){$\xi_{\Ao\Bo}$}
 		\put(41,62){$\Ao$}
		\put(74,63.5){$\Bo$}
		\put(50,88){$\Co$}
		\put(112,68){$I^f_{\Ao\to\Bo}$}
		\put(-12,50){$I^f_{\Bo\to\Ao}$}
 	\end{picture}
  	\caption{\small The  pair of spectacles~$\Spec^f$}
  	\label{F:bigon}
\end{figure}

\begin{lemma}
  	\label{L:KneadingTpqr}
  	The codes of the paths that follow the left and right boundaries of~$\bigon_{AB}$ are the words~$v_R$ and $u_L$ given by Table~1.
\end{lemma}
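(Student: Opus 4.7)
The plan is to walk along each boundary of $\bigon_{AB}$ face by face and read off the letter block contributed by each vertex traversed. The argument is symmetric between the right boundary (starting at $B$, with the bigon interior on the walker's left) and the left boundary (starting at $A$, with the bigon interior on the walker's right), and it splits further according to the parity of~$r$.

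The key local input is the dictionary of letter blocks. At an \emph{interior} vertex~$W$ of a face~$F^i$ on the boundary, both edges of the path at~$W$ bound a single sector of~$F^i$, of angle $2\pi/p$ or $2\pi/q$ depending on the type of~$W$. Using the conventions of Definition~\ref{D:coding}, in which $a$ corresponds to rotation by $+2\pi/p$ at an $A$-vertex and $b$ to rotation by $-2\pi/q$ at a $B$-vertex, and identifying on which side the interior of~$F^i$ lies relative to the walker, one reads off: on the right boundary, $a^{p-1}$ at an interior $A$-vertex and $b^1$ at an interior $B$-vertex; on the left boundary these become $a^1$ and $b^{q-1}$ respectively. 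At a \emph{transition} vertex $Y = F^i\cap F^{i+1}$, the path skips past the shared edge $(X,Y)$, sweeping two consecutive sectors instead of one; this yields $a^{p-2}$ or $b^2$ on the right boundary and $a^2$ or $b^{q-2}$ on the left, according to whether $Y$ is of type $A$ or $B$.

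Label the $2r$ vertices of a face $V_0, V_1, \ldots, V_{2r-1}$ counterclockwise with $(V_0, V_1) = (A, B)$, so that the opposite edge is $(V_r, V_{r+1})$. The right boundary of $\bigon_{AB}$ in this face visits $V_2, \ldots, V_r$, ending at the transition corner $Y = V_r$; the left boundary visits $V_{2r-1}, \ldots, V_{r+1}$, ending at $X = V_{r+1}$. When $r$ is odd, $V_r$ is $B$-type and $V_{r+1}$ is $A$-type, and the combinatorial type of each face is identical, so the code is periodic with period one face. On the right boundary this period is $(a^{p-1}b)^{(r-3)/2} a^{p-1} b^2$, obtained by alternating the $(r-1)/2$ interior $A$-blocks~$a^{p-1}$ with the $(r-3)/2$ interior $B$-blocks~$b^1$ and finishing with the transition block~$b^2$; this repeats to form~$u_L$. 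The symmetric count on the left boundary produces $(b^{q-1}a)^{(r-3)/2} b^{q-1} a^2$, which is $v_R$.

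When $r$ is even, $V_r$ is $A$-type and $V_{r+1}$ is $B$-type, and these types swap between $F^i$ and $F^{i+1}$; the code thus has period two faces. On the right boundary, $F^1$ ends at the $A$-transition $Y$ and contributes $(a^{p-1}b)^{(r-2)/2} a^{p-2}$, while $F^2$ (beginning from $Y$ as an $A$-vertex) ends at a $B$-transition and contributes $(ba^{p-1})^{(r-2)/2} b^2$; their concatenation is precisely the period of~$u_L$ displayed in Table~\ref{Table}. The left boundary yields $v_R$ by the analogous computation. The one delicate step is the sign bookkeeping imposed by the opposite orientations of~$a$ and~$b$, which is what makes interior vertices on the right boundary contribute $a^{p-1}$ and $b^1$ (rather than the symmetric $a^{p-1}$ and $b^{q-1}$); once this local dictionary is pinned down, for example by computing one explicit angle along the lines of the proof of Lemma~\ref{L:Ordering}, the rest of the proof is a direct vertex count.
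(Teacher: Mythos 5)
Your proof is correct and takes essentially the same approach as the paper's, which simply points to Figure~\ref{F:Borders} and leaves the face-by-face vertex count to the reader. You have spelled out in prose the local letter-block dictionary (interior $A$- and $B$-vertices, transition vertices, and how the role of left/right boundary flips the exponents) and the parity bookkeeping, all of which match the paper's Table~\ref{Table}.
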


\begin{proof}
	We have to draw the corresponding bigon, and to describe its borders. 
	This is depicted in Figure~\ref{F:Borders}.
	The code is periodic, the period corresponding to one face in the case of $r$ odd and two faces in the case of $r$ even. 
\end{proof}

\begin{figure}[ht]
  	\begin{picture}(70,50)(0,0)
      		\put(0,0){\includegraphics*[width=.4\textwidth]{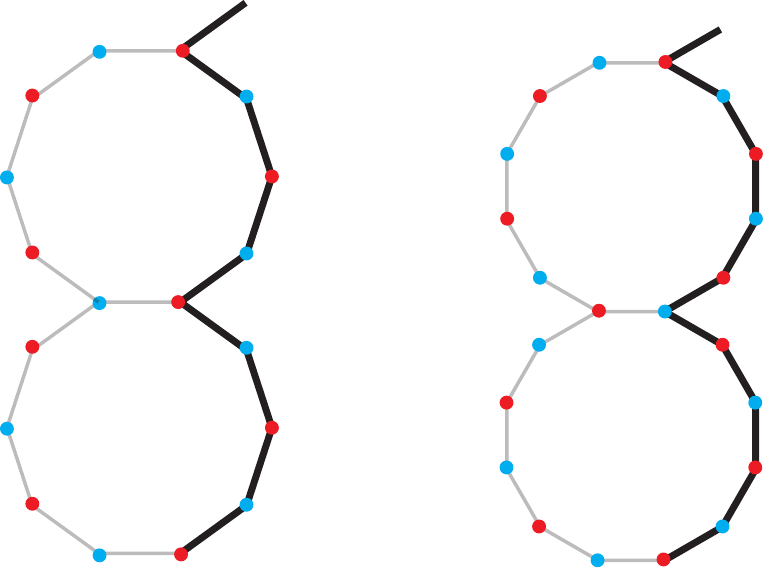}}
		\put(6,-2.5){$A$}
		\put(13,-2.5){$B$}
		\put(45,-3){$A$}
		\put(51,-3){$B$}
		\put(20,3){$a^{p-1}$}
		\put(22.5,10){$b$}
		\put(20.5,16.5){$a^{p-1}$}
		\put(17,19.5){$b^2$}
		\put(58,2){$a^{p-1}$}
		\put(60.5,6.79){$b$}
		\put(60.5,12.2){$a^{p-1}$}
		\put(58.3,16.2){$b$}
		\put(55,19){$a^{p-2}$}
		\put(58.3,22){$b$}
		\put(60.5,26.79){$a^{p-1}$}
		\put(60.5,32.2){$b$}
		\put(58,36.2){$a^{p-1}$}
		\put(56,39){$b^{2}$}
 	\end{picture}
  	\caption{\small The code $u_L$ of the right border of on infinite bigon is the periodic word $((a^{p-1}b)^{\frac{r-3}{2}} a^{p-1}b^2)^\infty$ 
	when $r$ is odd (on the left with $r=5$), and $((a^{p-1}b)^{\frac{r-2}{2}} a^{p-2}(ba^{p-1})^{\frac{r-2}{2}}b^2)^\infty$ when $r$ is even (on the right with $r=6$). }	\label{F:Borders}
\end{figure}

Let $\Spec^f = (I^f_{\Ao\to\Bo}, I^f_{\Bo\to\Ao})$ be the pair of spectacles such that the left extremity of~$I^f_{\Ao\to\Bo}$ and the right extremity of~$I^f_{\Bo\to\Ao}$ both coincide with the point~$\xi_{\Ao\Bo}$ (see Figure~\ref{F:bigon}).

\begin{lemma}
	\label{L:HalfPathsConverge}
	For every $A$ in~$\grpqr$ and $\xi$ on~$\bord\Hy$, the path~$\gamma^{\Spec^f}_{A\to \xi}$ converges to~$\xi$. 
\end{lemma}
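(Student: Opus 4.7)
The plan is to work with the inductively defined path $\gamma=(V^0,V^1,\ldots)$ with $V^0=A$ furnished by Lemma~\ref{L:Uniqueness}, which at each vertex selects the unique outgoing edge whose $\Spec^f$-spectacle contains $\xi$. By construction, $\xi$ lies in every $I^f_{V^i\to V^{i+1}}$, so the only issue is convergence: a priori the sequence $V^i$ could fail to leave compact subsets of $\Hy$, or it could converge to some boundary point other than $\xi$.

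I would first rule out immediate backtracking: if $V^{i+1}=V^{i-1}$, then $\xi$ would lie in both $I^f_{V^{i-1}\to V^i}$ and $I^f_{V^i\to V^{i-1}}$; by $\Gpqr$-equivariance this reduces to $I^f_{\Ao\to\Bo}\cap I^f_{\Bo\to\Ao}=\emptyset$, which holds since the two arcs lie on opposite sides of the complete geodesic through $\Ao,\Bo$. I would then prove that the whole sequence $(V^i)$ is injective. A simple subloop in $\gamma$ would bound a compact region $\Omega\subset\Hy$ with $\xi\notin\overline\Omega$, and the spectacle conditions around the loop would pin the direction from each $V^i$ toward $\xi$ to a consistent side of the loop; comparing this with the cumulative code-rotations $2\pi k_i/p$ and $2\pi k_i/q$ of Definition~\ref{D:coding} via Lemma~\ref{L:Ordering} should give a contradiction with combinatorial closure.

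Once injectivity is established, $(V^i)$ leaves every compact subset of $\Hy$ by local finiteness of $\grpqr$ and so accumulates only on $\bord\Hy$. For convergence to the specific point $\xi$, I would work with the nested intersections $K_i:=I^f_{V^0\to V^1}\cap\cdots\cap I^f_{V^i\to V^{i+1}}$. By the crucial feature of $\Spec^f$---that the left extremity of each spectacle is the normal extremity $\xi_{V^iV^{i+1}}$ of its underlying edge---the $K_i$ are trimmed on one side by successive bigon limits; Lemma~\ref{L:KneadingTpqr} identifies the codes of these bigon boundaries with $u_L$ and $v_R$ of Table~1. Using Lemma~\ref{L:Ordering} to compare the code of $\gamma$ with these extremal codes, one should see that both endpoints of $K_i$ tend to $\xi$ as $i\to\infty$, forcing $K_i\downarrow\{\xi\}$, and since the vertex $V^i$ lies at the ``apex'' of this shrinking angular sector, $V^i\to\xi$.

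I expect the main obstacle to be the injectivity step: the argument sketched above is essentially topological and does not directly reveal \emph{why} the extremal choice of $\Spec^f$, as opposed to some other accurate pair of spectacles, is precisely what prevents cycles. I would try to address this by an explicit case analysis on how a putative minimal loop sits against the bigons attached to its edges, showing that the bigon limits $\xi_{V^iV^{i+1}}$ force a monotone progression of the endpoints of $K_i$ on $\bord\Hy$ that is inconsistent with returning to a previously visited vertex.
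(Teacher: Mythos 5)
Your proposal takes a genuinely different (and substantially longer) route than the paper, and it has real gaps that you yourself flag. The paper dispatches this lemma with a short Busemann--function argument: writing the path as $(A^0,B^1,A^2,\dots)$, one shows that the spectacle $I^f_{B^{2i+1}\to A^{2i+2}}$ is contained in the half of $\bord\Hy$ cut out by the perpendicular bisector of $A^{2i}A^{2i+2}$ that contains $A^{2i+2}$; hence the Busemann function $b_\xi$ strictly decreases along $A^0,A^2,A^4,\dots$ (and, by $\Gpqr$-equivariance and the finiteness of configurations up to the group action, decreases by a uniformly bounded amount), which immediately forces $A^{2i}\to\xi$. This is both the injectivity step and the convergence step in one shot, and it makes no detour through nested boundary intervals.

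By contrast, your plan has two unfilled holes. First, the injectivity step: you propose to derive a contradiction from a putative subloop via ``cumulative code-rotations'' and Lemma~\ref{L:Ordering}, but you acknowledge you do not see how the extremal placement of $\Spec^f$ enters, and as written there is no argument. Second, the convergence step: you want $K_i:=\bigcap_{j\le i}I^f_{V^j\to V^{j+1}}\downarrow\{\xi\}$ and then conclude $V^i\to\xi$ because ``$V^i$ lies at the apex of the shrinking sector.'' The final inference is fine (a vertex subtending a fixed positive angle $2\pi/p$ or $2\pi/q$ onto an interval shrinking to a point must converge to that point), but the claim $K_i\downarrow\{\xi\}$ is not established: the $K_i$ are nested and all contain $\xi$, but without an a priori bound relating consecutive spectacles you cannot rule out that they stabilise at a nondegenerate interval. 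The paper's perpendicular--bisector observation is precisely the quantitative input you are missing, and once you have it there is no need for either the injectivity analysis or the $K_i$ intersection; I would recommend replacing your scheme with the Busemann argument rather than trying to patch it.
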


\begin{proof}
	Let $(A^0, B^1, A^2, \dots)$ denote the consecutive vertices visited by~$\gamma^{\Spec^f}_{A\to \xi}$ and $b_\xi$ be a Busemann function~\cite{Busemann} on~$\Hy$ associated to~$\xi$ .
	We claim that~$b_\xi$ is decreasing when evaluated along every second point of~$\gamma^{\Spec^f}_{A\to \xi}$, 
	that is, $b_\xi(A^0) > b_\xi(A^2) > b_\xi(A^4) > \dots$ holds.
	Indeed, one checks (see Figure~\ref{F:Busemann}) that for every $i$ the spectacles~$I_{B^{2i+1}\to A^{2i+2}}$ are included 
	inside the interval of~$\bord\Hy$ consisting of those directions that are closer to~$A^{2i+2}$ than to~$A^{2i}$. \end{proof}

\begin{figure}[ht]
  	\begin{picture}(95,95)(0,0)
      		\put(0,0){\includegraphics*[width=.6\textwidth]{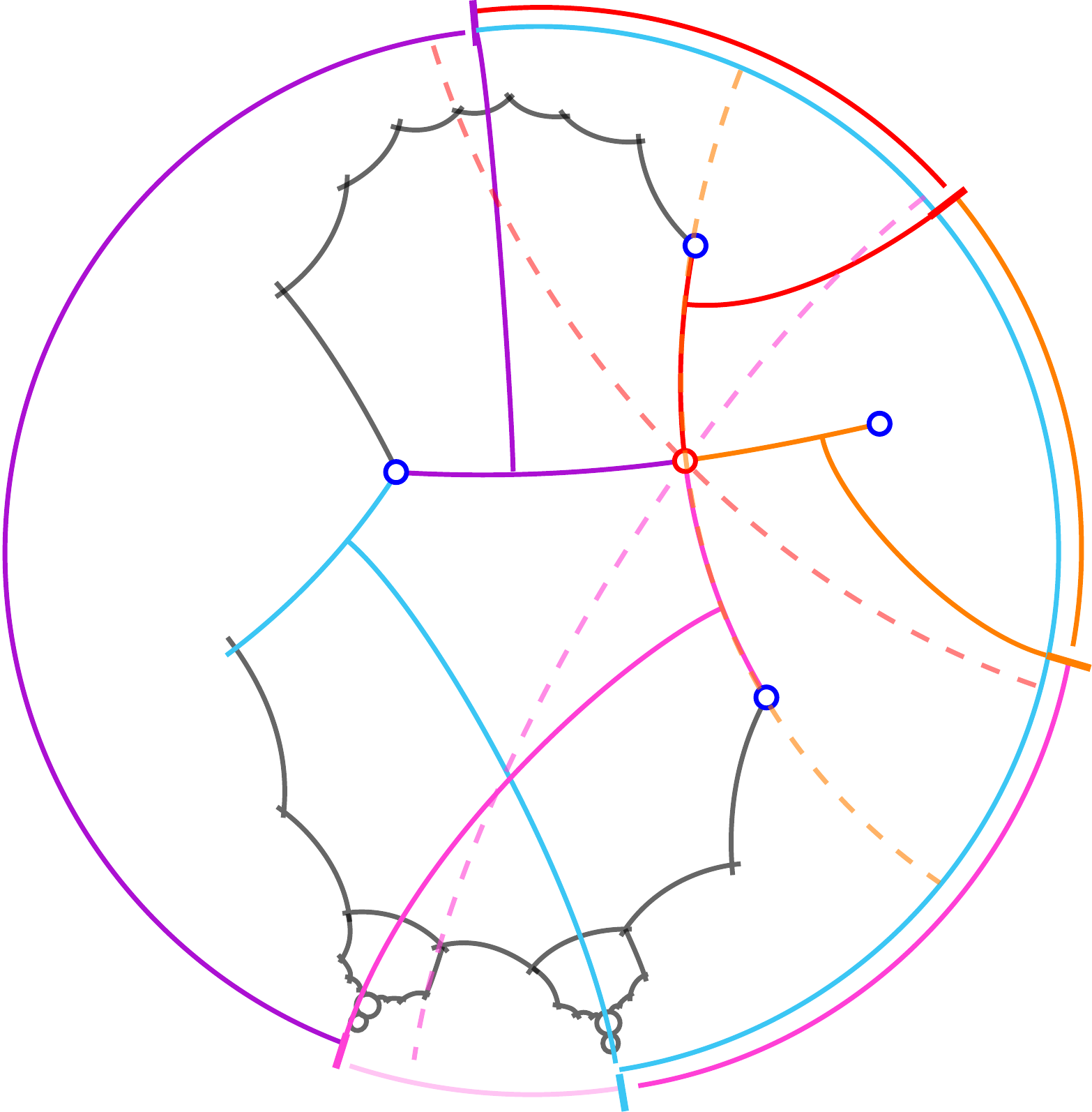}}
		\put(27,52){$A_0$}
		\put(52,70){$A_1$}
		\put(74,55){$A_2$}
		\put(64,33){$A_3$}
		\put(52,50){$B_0$}
		\put(72,83){$I_{\Ao\to\Bo}\cap I_{\Bo\to A_1}$}
		\put(89,57){$I_{\Ao\to\Bo}\cap I_{\Bo\to A_2}$}
		\put(80,15){$I_{\Ao\to\Bo}\cap I_{\Bo\to A_3}$}
		\put(37,93){$\xi_{\Ao\Bo}$}
		\put(81,75){$\xi_{A_1\Bo}$}
		\put(91,36){$\xi_{A_2\Bo}$}
		\put(24,1){$\xi_{A_3\Bo}$}
		\put(47,-2){$\xi_{\Bo\Ao}$}
 	\end{picture}
  	\caption{\small For every~$\xi$ in the interval~$I_{\Ao\to\Bo}\cap I_{\Bo\to A_1}$ (approximately the interval between 12:00 and 1:30), the Busemann function~$b_\xi$ is smaller at $A_1$ than at~$A_0$: 
	indeed $I_{\Ao\to\Bo}\cap I_{\Bo\to A_1}$ is included in the half space defined by the perpendicular bisector of~$(\Ao A_1)$ (dashed) and containing~$A_1$. 
	Similarly for $A_2$ (1:30 to 3:300) and $A_3$ (3:30 to 6:00).}
  	\label{F:Busemann}
\end{figure}

Now we have to see why for every $\eta,\xi$ on~$\bord\Hy$ there exists an admissible path that connects~$\eta$ to~$\xi$. 
 
\begin{lemma}
	\label{L:PathsConverge}
	The pair of spectacles~$\Spec^f$ is accurate. 
\end{lemma}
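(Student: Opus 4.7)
The semi-infinite part of accuracy is already settled by Lemma~\ref{L:HalfPathsConverge}: for every vertex $A$ of $\grpqr$ and every $\xi \in \bord\Hy$, the unique $\Spec^f$-admissible path from $A$ to $\xi$ produced by Lemma~\ref{L:Uniqueness} does converge to $\xi$. So it only remains to construct, for arbitrary $\eta, \xi \in \bord\Hy$, a $\Spec^f$-admissible bi-infinite path connecting $\eta$ to $\xi$.

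My plan is to obtain such a path as a limit of semi-infinite admissible rays. I would choose a base vertex $V_0$ of $\grpqr$ and let $(V_n)_{n\ge 1}$ be the successive vertices of the admissible ray $\gamma^{\Spec^f}_{V_0 \to \eta}$, so that $V_n \to \eta$ in $\bord\Hy$. For each $n$, form the admissible semi-infinite path $\gamma_n := \gamma^{\Spec^f}_{V_n \to \xi}$, which exists by the semi-infinite case. The key step is to show that, for each fixed $m$, there is an $N$ such that for all $n\ge N$ the path $\gamma_n$ passes through $V_m$. Granted this, the uniqueness of forward admissible extensions (Lemma~\ref{L:Uniqueness}) forces all such $\gamma_n$ to coincide with the corresponding $\gamma_m$ from $V_m$ onward, so the paths stabilise to a well-defined bi-infinite sequence of vertices. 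This sequence is admissible on every finite subinterval, hence admissible; its forward limit is $\xi$ by Lemma~\ref{L:HalfPathsConverge}, and its backward limit is $\eta$ because it contains the vertices $V_m \to \eta$.

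The main obstacle is this stabilisation claim. I plan to establish it by refining the Busemann-function argument used in Lemma~\ref{L:HalfPathsConverge}. Since $b_\xi$ decreases every two steps along each $\gamma_n$, and $V_n$ is very close to $\eta$ for $n$ large, the initial segment of $\gamma_n$ should be confined to a bounded neighbourhood of the geodesic from $\eta$ to $\xi$; the discreteness of $\grpqr$ combined with the tiling property of outgoing spectacles at each vertex would then force repeated visits to the same vertices, hence to $V_m$. A secondary issue I would need to verify is that backward extension is always available at all: for any vertex $V$ and any $\xi \in \bord\Hy$, at least one neighbour $V'$ of $V$ must satisfy $\xi \in I^f_{V' \to V}$, otherwise the sequence $(V_n)$ would eventually give a $V_m$ from which no path $\gamma_n$ could reach $V_m$ from behind. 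I expect this to follow from the specific placement of the base spectacles with extremities meeting at the bigon extremity $\xi_{\Ao\Bo}$, which is exactly what makes the $p$ (respectively $q$) incoming spectacles at each vertex in the orbit of $\Ao$ (respectively $\Bo$) cover $\bord\Hy$ with no gap.
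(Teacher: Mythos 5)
Your overall strategy -- approximate $\eta$ by a sequence of basepoints $V_n$ going out to $\eta$, form the semi-infinite admissible paths $\gamma_n=\gamma^{\Spec^f}_{V_n\to\xi}$, and extract a bi-infinite admissible path as a limit -- is exactly the route taken in the paper. But your pivotal ``stabilisation claim,'' that for every fixed $m$ the paths $\gamma_n$ eventually all pass through the specific vertex $V_m$, is both unproven and almost certainly false, and it is not what the paper proves. The vertices $V_m$ are singled out by admissibility \emph{toward} $\eta$ (they lie on $\gamma^{\Spec^f}_{V_0\to\eta}$), whereas the paths $\gamma_n$ are governed by admissibility toward $\xi$. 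These two conditions are independent: at $V_n$ the unique $\Spec^f$-admissible continuation toward $\xi$ is the neighbour whose spectacle contains $\xi$, and nothing forces this neighbour to be $V_{n-1}$, so $\gamma_n$ will generically leave the sequence $(V_m)$ at the first step. You rely on this claim a second time when you argue that the backward tail of the limiting path converges to $\eta$ ``because it contains the vertices $V_m$''; once the claim is dropped, that conclusion also needs a different justification.

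The paper's actual argument replaces your stabilisation claim with a quasi-geodesic-plus-pigeonhole argument that you gesture at but do not carry out. The needed ingredients are: (i) each $\gamma_n$ stays within a uniformly bounded distance of the hyperbolic geodesic $g_{V_n\to\xi}$, and since the $V_n$ stay near $g_{\Ao\to\eta}$ and converge to $\eta$, all the $\gamma_n$ (for $n$ large) lie in a fixed bounded neighbourhood of $g_{\eta\to\xi}$; (ii) the $\Gpqr$-orbit of $\Ao$ is discrete, so any bounded region of that neighbourhood contains only finitely many vertices; hence by pigeonhole infinitely many $\gamma_n$ pass through a common vertex $W$, and by Lemma~\ref{L:Uniqueness} coincide from $W$ onward. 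Iterating this over vertices farther and farther toward $\eta$ and extracting a diagonal subsequence produces a bi-infinite path; its forward tail converges to $\xi$ by Lemma~\ref{L:HalfPathsConverge}, and its backward tail converges to $\eta$ because the backward vertices escape toward $\eta$ inside the bounded neighbourhood of $g_{\eta\to\xi}$. Your ``secondary issue'' about whether one can extend backward at each vertex is then moot: one never extends backward step by step, the bi-infinite path emerges directly from the diagonal extraction.
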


\begin{proof}
	We have to show that for every $\eta, \xi$ on~$\bord\Hy$ there exists an $\Spec^f$-admissible path connecting~$\eta$ to~$\xi$. 
	Consider a sequence~$(A_n)_{n\in\N}$ of points in the orbit of~$\Ao$ that converges to~$\eta$ and stays at a bounded distance 
	from the hyperbolic geodesic connecting~$\Ao$ to~$\eta$. 
	By Lemma~\ref{L:HalfPathsConverge} all paths~$\gamma^{\Spec^f}_{A_n\to \xi}$ converge to~$\xi$. 
	Also every path~$\gamma^{\Spec^f}_{A_n\to \xi}$ is at a bounded distance from the geodesic connecting~$A_n$ to~$\xi$.
	Therefore, every path~$\gamma^{\Spec^f}_{A_n\to \xi}$ is at a bounded distance from the geodesic that connects~$\eta$ to~$\xi$. 
	Since the orbit of~$\Ao$ contains only finitely many points in every ball of bounded radius, 
	this implies that infinitely many paths~$\gamma^{\Spec^f}_{A_n\to \xi}$ go through the same point, hence ultimately coincide. 
	By extraction, this yields an $\Spec^f$-admissible path connecting~$\eta$ to~$\xi$.
\end{proof}


\subsection{Uniqueness of the coding}
\label{S:Uniqueness}

The last missing point for proving Theorem~\ref{P:Coding} in the case $p>2$ is to see when admissible paths fail to be unique. 
Denote the word $(a^{p-1}b)^{\frac{r-3}{2}} a^{p-2}b$ when $r$ is odd and $(a^{p-1}b)^{\frac{r-2}{2}} a^{p-2}b(a^{p-1}b)^{\frac{r-4}{2}}a^{p-2}b$ when $r$ is even by $x_L$, and the word $(b^{q-1}a)^{\frac{r-3}{2}} b^{q-2}a$ when $r$ is odd and $(b^{q-1}a)^{\frac{r-2}{2}} b^{q-2}a(b^{q-1}a)^{\frac{r-4}{2}}b^{q-2}a$ when $r$ is even by $x_R$.
We then have (see Table~2) $w_L=x_L^\infty$ and $w_R=x_R^\infty$.
	
\begin{lemma}
	\label{L:BiUniqueness}
	Assume $r$ finite.
	Suppose that $\gamma_1, \gamma_2$ are two $\Spec^f$-admissible paths connecting the same points on~$\bord\Hy$. 
	Then either $\gamma_1, \gamma_2$ coincide in a neighbourhood of~$\xi$ in which case 
	their codes are of the form~$(\dots x_Lx_Lx_L)w_1$ and~$(\dots x_Rx_Rx_R)w_2$ 
	where $w_1$ and $w_2$ are semi-infinite words that ultimately coincide, or $\gamma, \gamma'$ are disjoint in which case their codes are $x_L^\infty$ and $x_R^\infty$.
\end{lemma}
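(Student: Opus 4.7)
The plan is to first use Lemma~\ref{L:Uniqueness} to locate where $\gamma_1$ and $\gamma_2$ can differ, then to identify the region between them as a bigon chain in the sense of Definition~\ref{D:BigonChain}, and finally to read off the boundary codes. Observe that if $\gamma_1$ and $\gamma_2$ share any vertex $V$, then by Lemma~\ref{L:Uniqueness} both of their forward continuations from $V$ equal the unique $\Spec^f$-admissible path from $V$ to $\xi$, and so coincide from $V$ onward. The set of common vertices is therefore either empty or a forward half-line ending at $\xi$, and this yields the two cases of the lemma: in the disjoint case no vertex is shared, and otherwise there is an earliest common vertex $V_0$ from which the two paths agree toward $\xi$ but diverge backward. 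In both cases, the two vertex-disjoint backward portions bound a region $R \subset \Hy$.

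Second, since $\Hy \setminus \grpqr$ is tiled by $2r$-gons, $R$ is a connected union of such faces, and I want to show that this union is a bigon chain, namely that successive faces $F_i, F_{i+1}$ in $R$ meet along an edge which is opposite in $F_i$ to $F_{i-1} \cap F_i$. Any other configuration would produce a vertex on the boundary of $R$ at which the outgoing edge of $\gamma_1$ or $\gamma_2$ forces $\xi$ outside the spectacle containing it. This uses the explicit placement of the normal extremities $\xi_{AB}$ at the extremities of the $\Spec^f$-spectacles, together with Lemma~\ref{L:Ordering} to convert the local geometric position of the paths into lexicographic comparison of codes. This is the main obstacle and where I expect to spend most of the effort, either via a discrete geometric case analysis, or, more cleanly, a purely symbolic argument based on Lemma~\ref{L:BiAdmissible} that pins down any ambiguity to an $x_L$-versus-$x_R$ alternation.

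Third, once $R$ is identified as a bigon chain, its two boundary codes are computed just as in Lemma~\ref{L:KneadingTpqr}, with one key difference: the bigon $\bigon_{AB}$ of that lemma begins at a distinguished initial edge $(AB)$ whose presence is precisely what accounts for the extra $b^2$ (resp.\ $a^2$) at the end of each period of $u_L$ (resp.\ $v_R$) in Table~\ref{Table}. Since our $R$ has no such distinguished capping edge, its boundary periods shorten to $x_L, x_R$ as defined just before the lemma statement. In the disjoint case, the chain is bi-infinite and the codes are $x_L^\infty, x_R^\infty$. In the shared case, the chain has its forward end at $V_0$, so the periodic backward tails $\ldots x_L x_L x_L$ and $\ldots x_R x_R x_R$ are followed by the common forward code of the two paths beyond $V_0$; the initial blocks of this forward tail differ in $w_1$ and $w_2$ because the angle read at $V_0$ depends on the differing predecessor in each path, but from the next vertex onward the two codes coincide, giving the ``ultimately coincide'' assertion.
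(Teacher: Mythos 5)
Your overall strategy matches the paper's: use Lemma~\ref{L:Uniqueness} to split into the disjoint and shared cases, identify the region $R$ between the two backward parts as a chain of faces, use the spectacle constraints to pin down the chain, and read off the codes. The dichotomy in Step~1 is correct and argued essentially as the paper does. But there are two genuine problems in the heart of the argument, which you yourself flag as ``the main obstacle.''

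First, the claim in Step~2 that $R$ must be a bigon chain in the sense of Definition~\ref{D:BigonChain}, i.e.\ that the separating edges $F^{i-1}\cap F^i$ and $F^i\cap F^{i+1}$ are always \emph{opposite} in $F^i$, is \emph{false when $r$ is even}. The paper shows in that case that the two paths necessarily traverse $r$ and $r-2$ sides of $F^i$ alternately (never $r-1$ and $r-1$), so the separating edges are offset from opposite by one, alternating in direction from face to face. This is not ruled out by admissibility; on the contrary, it is what admissibility forces. Your assertion that ``any other configuration would force $\xi$ outside the spectacle'' therefore overreaches, and the consequent invocation of Lemma~\ref{L:KneadingTpqr} to compute the boundary code is not available for $r$ even, where $x_L$ has the longer period $(a^{p-1}b)^{\frac{r-2}{2}}a^{p-2}b(a^{p-1}b)^{\frac{r-4}{2}}a^{p-2}b$ covering two faces. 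You would need the paper's parity case analysis here, and it is not present. Relatedly, you never establish that the strip $R$ has width exactly one face; the paper needs a separate argument (a width-two strip would force a code with more than $r/2$ consecutive blocks $a^{p-1}b$, contradicting the kneading bound from Table~\ref{Table}), and your plan contains no substitute for it.

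Second, the heuristic in Step~3 explaining the discrepancy between $u_L,v_R$ and $x_L,x_R$ is off. The extra $b^2$ (resp.\ $a^2$) in $u_L$ (resp.\ $v_R$) is not an artifact of the ``distinguished initial edge'' of the bigon; it appears at the corner vertex in \emph{every} period. The actual reason for the change is that in the bigon of Lemma~\ref{L:KneadingTpqr} the right boundary turns at the $B$-type endpoints of the separating edges (giving $b^2$ there and $a^{p-1}$ at every $A$-vertex), whereas in the present lemma the admissibility constraint $\xi\notin I_{A^i\to B^i}\cup I_{B^i\to A^i}$ forces $\gamma_1$ to turn at the $A$-type endpoints (giving $a^{p-2}$ there), and for $r$ even the offset structure changes the period further. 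This is precisely the content of the paper's case analysis that your plan defers. So the skeleton is right, but the central combinatorial step---and with it the correct identification of the codes---is missing.
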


\begin{proof}
	First suppose that~$\gamma_1, \gamma_2$ have no point in common. 
	Since $\gamma_1, \gamma_2$ have the same extremities on~$\bord\Hy$, 
	they are separated by an infinite strip of faces of~$\Hy\setminus\grpqr$, namely there exists faces~$\dots, F^{-1}, F^0, F^1, \dots$, 
	such that $F^{i+1}$ is adjacent to~$F^i$, that $\gamma_1$ is on the left of $\dots\cup F^{-1}\cup F^0\cup F^1\cup \dots$ and $\gamma_2$ on the right. 
	Note that the width of the strip cannot exceed one face.
	Indeed, since all faces are $2r$-gons with $r\ge 4$ whose vertices have angle at least $\pi/3$, one checks that in order to converge to the same point at infinity, 
	one of the paths would have to use more than half of the sides of a face. 
	This would contradict the admissibility of the path: indeed the code of such a path would contain at least $r/2$ consecutive blocks of the form~$a^{p-1}b$ (or $ab^{q-1}$), and this is prohibited by the kneading sequences of Table~\ref{Table}. 
	
	Now, for every~$i$, the intersection $F^i\cap F^{i+1}$ contains two vertices of~$\grpqr$ that we denote by $A^i$ and $B^i$. 
	Then $\gamma_1$ and $\gamma_2$ are two $\Spec^f$-admissible paths connecting~$B^i$ and $A^i$ to~$\xi$. 
	Since $\gamma_1$ does not go through $A^i$ and $\gamma_2$ not through $B^i$, 
	the point $\xi$ belongs to~$\bord\Hy \setminus (I_{A^i\to B^i}\cup I_{B^i\to A^i})$ (the bottom gray interval on Figure~\ref{F:bigon}) for every~$i$.
	This forces $\gamma_1\cap F^i \cap F^{i+1}$ to be in the orbit of~$A$ for every~$i$ and $\gamma_2\cap F^i \cap F^{i+1}$ to be in the orbit of~$B$. 

	Assume first $r$ is odd. 
	The previous remark implies that $\gamma_1$ and $\gamma_2$ both travel along $r-1$ sides of~$F^i$, so that the faces~$F^{i-1}$ and $F^{i+1}$ are opposite
	with respect to~$F^i$. Therefore, the codes of~$\gamma_1$ and $\gamma_2$ are $x_L^\infty$ and $x_R^\infty$.

	Next, assume $r$ is even. 
	The remark implies that $\gamma_1$ and $\gamma_2$ travel one along $r-2$ sides of~$F^i$ and the other along $r$ sides. 
	Similarly, in the face~$F^{i+1}$, the two paths travel along $r-2$ and $r$ sides of~$F^{i+1}$. 
	If the same path, say~$\gamma_1$, travels along $r$ sides of~$F^i$ and $F^{i+1}$, 
	then $\xi$ actually belongs to the interval~$I_{A^i\to B^i}$ so that $\gamma_1$ should have visited~$B^i$, a contradiction. 
	Therefore $\gamma_1$ must travel along $r$ sides of $F^i$ and $r-2$ of~$F^{i+1}$, 
	while~$\gamma_2$ travels along $r-2$ sides of~$F^i$ and $r$ sides of~$F^{i+1}$. 
	By induction, the codes of~$\gamma_1$ and $\gamma_2$ are $x_L^\infty$ and~$x_R^\infty$.

	Now if~$\gamma_1, \gamma_2$ have one point in common, by Lemma~\ref{L:Uniqueness}, they coincide after that point. 
	By the same argument as above, the part between them is a semi-infinite chain of~$2r$-gons. 
	The only possibility for such a chain is to be of the above form. 
\end{proof}

We can now conclude.

\begin{proof}[Proof of Theorem~\ref{P:Coding} in the case $p>2$]
	We consider the pair of spectacles~$\Spec^f$ introduced in Section~\ref{S:Explicit}.
	By Lemma~\ref{L:PathsConverge} the pair~$\Spec^f$ is accurate, meaning that for every $\eta, \xi$ on~$\bord\Hy$, 
	there exists an $\Spec^f$-admissible path connecting~$\eta$ to $\xi$. 
	By Lemma~\ref{L:BiAdmissible}, accurate paths are exactly the paths whose code verify the given inequalities. 
	Finally, by Lemma~\ref{L:BiUniqueness}, if $\eta, \xi$ are lifts of the extremities of a periodic geodesic this $\Spec^f$-admissible path is unique, 
	except in the case of the words $w_L$ and $w_R$. 
\end{proof}

\subsection{Case $p=2$} 
	\label{S:p=2}

All definitions and lemmas of Section~\ref{S:Codes} and~\ref{S:GoodSpectacles} and still valid when $p=2$. 
Since faces of~$\Hy\setminus\grpqr$ now have $r$ sides and not~$2r$, 
the definition of bigons (Definition~\ref{D:BigonChain}) and the description of their boundaries (Lemma~\ref{L:KneadingTpqr}) need to be adapted.

We still consider the faces as $2r$-gons (with angles at some of the vertices equal to $\pi$) and describe the corresponding boundaries. 
Up to switching between $q$ and $r$, we can suppose $r\ge 5$. 
The kneading sequences in this case are given by Table~\ref{Table} as shown by Figure~\ref{F:p=2}.

\begin{figure}[ht]
  	\begin{picture}(135,60)(0,0)
      		\put(0,-2){\includegraphics*[width=.865\textwidth]{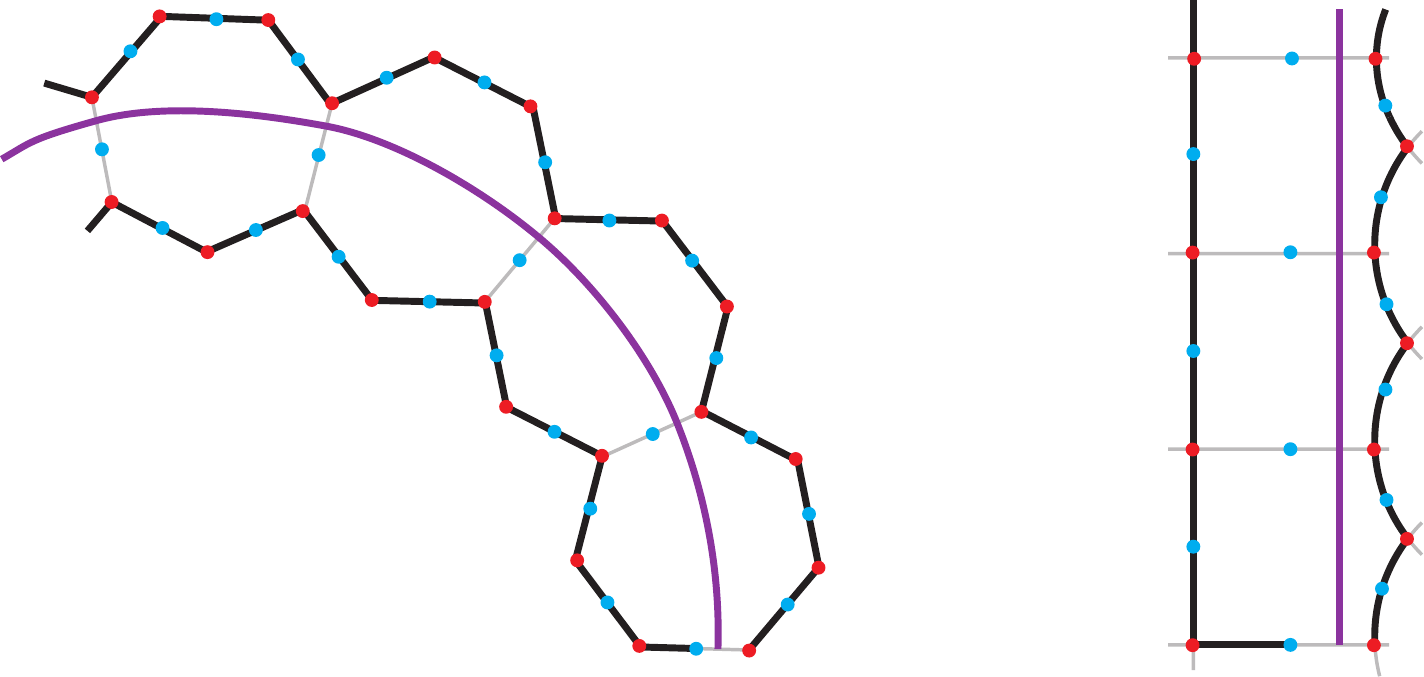}}
		\put(50,-1){$b^{q-1}$}
		\put(52,4){$a$}
		\put(45,7){$b^{q-1}$}
		\put(50.5,13){$a$}
		\put(46.5,15.5){$b^{q-2}$}
		\put(72.3,3.5){$a$}
		\put(75.5,7){$b$}
		\put(74.5,12.2){$a$}
		\put(73.6,16.7){$b$}
		\put(69.3,20){$a$}
		\put(65,22.2){$b^{2}$}
		\put(66,26.5){$a$}
		\put(67,31){$b$}
		\put(64,35.5){$a$}
		\put(61.5,39){$b$}
		\put(101,0){$b^{q-1}$}
		\put(105,9){$a$}
		\put(101,18){$b^{q-2}$}
		\put(105.5,27){$a$}
		\put(101,36){$b^{q-2}$}
		\put(127,5){$a$}
		\put(129.5,9){$b$}
		\put(128,13.5){$a$}
		\put(127,18){$b^2$}
		\put(127.5,23){$a$}
		\put(129.5,27){$b$}
		\put(127.5,31.5){$a$}
		\put(127,36){$b^2$}
 	\end{picture}
  	\caption{\small An infinite bigon in the case $p=2, r=7$ on the left. 
	The code $u_L$ of the right border is the periodic word $((ab)^{\frac{r-3}{2}} ab^2)^\infty$ when $r$ is odd (on the picture with $r=7$) 
	and  $((ab)^{\frac{r-4}{2}} ab^2)^\infty$ when $r$ is even. 
	The code $v_R$ of the left border is $b^{q-1}((ab^{q-1})^{\frac{r-5}{2}} ab^{q-2})^\infty$ when $r$ is odd 
	and $b^{q-1}((ab^{q-1})^{\frac{r-4}{2}} ab^{q-2})^\infty$ when $r$ is~even.
	The case $p=2, r=5$ on the right: the term $(ab^{q-1})^{\frac{r-5}{2}}$ disappears in $v_R$.}
  	\label{F:p=2}
\end{figure}

The proofs of Lemmas~\ref{L:HalfPathsConverge} and \ref{L:PathsConverge} need no special adaptation. 
Finally the proof of Lemma~\ref{L:BiUniqueness} can be translated to this case.
The only modification is that for every $i$ the intersection $F^i\cap F^{i+1}$ consists of two vertices $B_1^i$ and $B_2^i$ that are both adjacent to some point~$A^i$,
and thus the point $\xi$ belongs to $\bord\Hy \setminus (I_{A^i\to B_1^i} \cup I_{A^i\to B_2^i})$. 
If $r$ is odd this forces $\gamma_1$ to travel along $\frac{r-3}2$ sides of $F^i$ and $\gamma_2$ along $\frac{r-1}2$ sides (or $\frac{r-1}2$ and $\frac{r-3}2$ sides respectively). 
At the next face~$F^{i+1}$, the same argument as in the proof of Lemma~\ref{L:BiUniqueness} proves that the numbers have to alternate. 
Therefore the codes of $\gamma_1$ and $\gamma_2$ are those given by Table~\ref{TableExc}.
If $r$ is even the same proof also gives the codes of Table~\ref{TableExc}.

Finally the proof of Theorem~\ref{P:Coding} is now exactly the same that in the case~$p>2$.


\section{The template $\Tpqr$}
	\label{S:Tpqr}

We now introduce the main character of this story: the template~$\Tpqr$ in~$\utS$. 
The basic idea is that Theorem~\ref{P:Coding} gives a canonical way to distort~$\utS$ onto the 2-complex that is made of the fibers of the points of~$\grpqr/\Gpqr$.
However, this 2-complex is not a template because it branches along two circles, namely the fibers of the two vertices, instead of along intervals. 
The trick for proving Theorem~\ref{T:Template} is then to replace the vertices of $\grpqr$ by \emph{roundabouts}, thus obtaining a template that has the desired properties.
More precisely, we distort any path in $\grpqr$ in a neighbourhood of each vertex. 
The crucial point is that since we make the distortion in $\Hy$, we can choose to push the path to one side of the vertex or the other as we wish. 
We are therefore able to arrange all paths that pass through a vertex so that they turn around the vertex in a fixed direction (counterclockwise for any image of $\Ao$, clockwise for any image of $\Bo$). 
Thanks to the admissibility of the original path, the transformed path will rotate by an angle at most $\frac{p-1}p2\pi$ around~$\Ao$, and at most $\frac{q-1}q2\pi$ around~$\Bo$. 

For $A, B$ two adjacent vertices of~$\grpqr$, we denote by $B^+$ the image of $B$ by a $+2\pi/p$-rotation around~$A$ and by $A^+$ the image of $A$ by a $-2\pi/q$-rotation around~$B$.

 \begin{figure}[ht]
   	\begin{picture}(110,110)(0,0)
        \put(0,0){\includegraphics*[width=.72\textwidth]{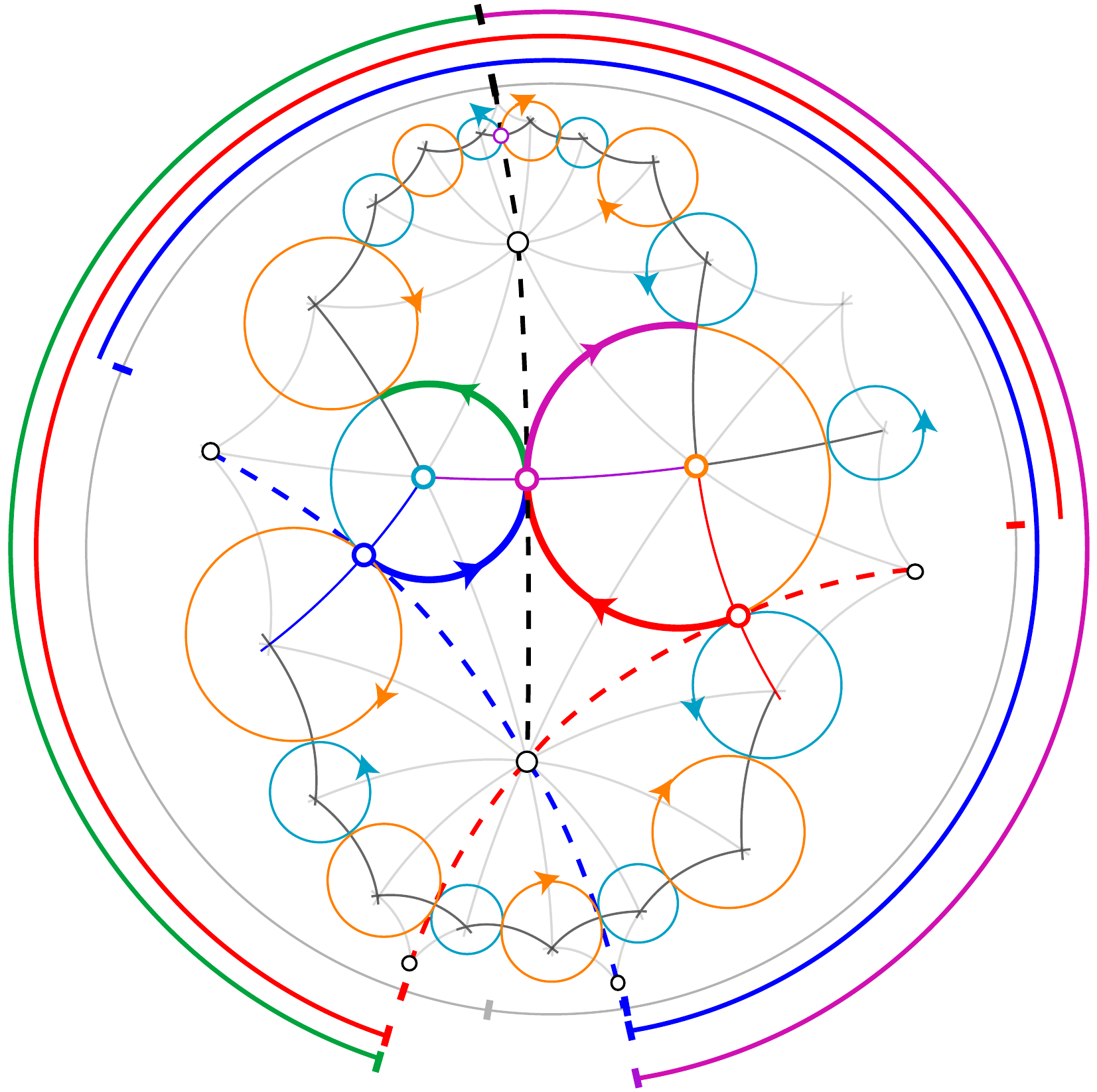}}
        \put(40,56){$A$}
        \put(65,57){$B$}
        \put(68,78){$A^+$}
        \put(27,77){$B^+$}
        \put(76,37){$A^-$}
        \put(22,44){$B^-$}
        \put(42,67){$\cerA$}
        \put(57,70){$\cerB$}
        \put(43,48){$\cerAm$}
        \put(57,50){$\cerBm$}
   	\end{picture}
   	\caption{\small The graph $\bouquetHy$.}
   	\label{F:Bouquet}
 \end{figure}

\begin{definition}[see Figure~\ref{F:Bouquet}]
  	\label{D:snake}
  	Let~$\bouquet$ be the embedded oriented graph in~$\Spqr$ which is a bouquet of two oriented circles, 
	one winding once around~$\Ad$ and one winding minus one times around~$\Bd$. 
  	Let~$\bouquetHy$ denote the lift of~$\bouquet$ in~$\Hy$. 
	Every lift of a circle is called a \emph{roundabout}. 
	A \emph{switch} is the contact point of two adjacent roundabouts.
  	For $A, B$ two adjacent vertices of~$\grpqr$, we denote by $\cerA$ the arc of the roundabout around~$A$ that connects the edge $(AB)$ to~$(AB^+)$ 
	and by $\cerB$ the arc of the roundabout around~$B$ that connects the edge $(AB)$ to~$(A^+B)$. 
\end{definition}

Every vertex of~$\grpqr$ is canonically associated to a roundabout of~$\bouquetHy$, and every edge~$\grpqr$ to a switch of~$\bouquetHy$.

\begin{definition}[see Figure~\ref{F:Tango}]
  	\label{D:tango}
  	Assume that $\gamma=(\dots, A^{-1}, B^0, A^1, \dots)$ is a simple path in~$\grpqr$.
  	The associated \emph{tango path} $\curvy\gamma$ is the path in~$\bouquetHy$ that is obtained by rotating around the roundabouts associated 
	to the vertices of $\gamma$ and changing at the switches that correspond to the edges of~$\gamma$.
\end{definition}

 \begin{figure}[ht]
 	\includegraphics*[width=.5\textwidth]{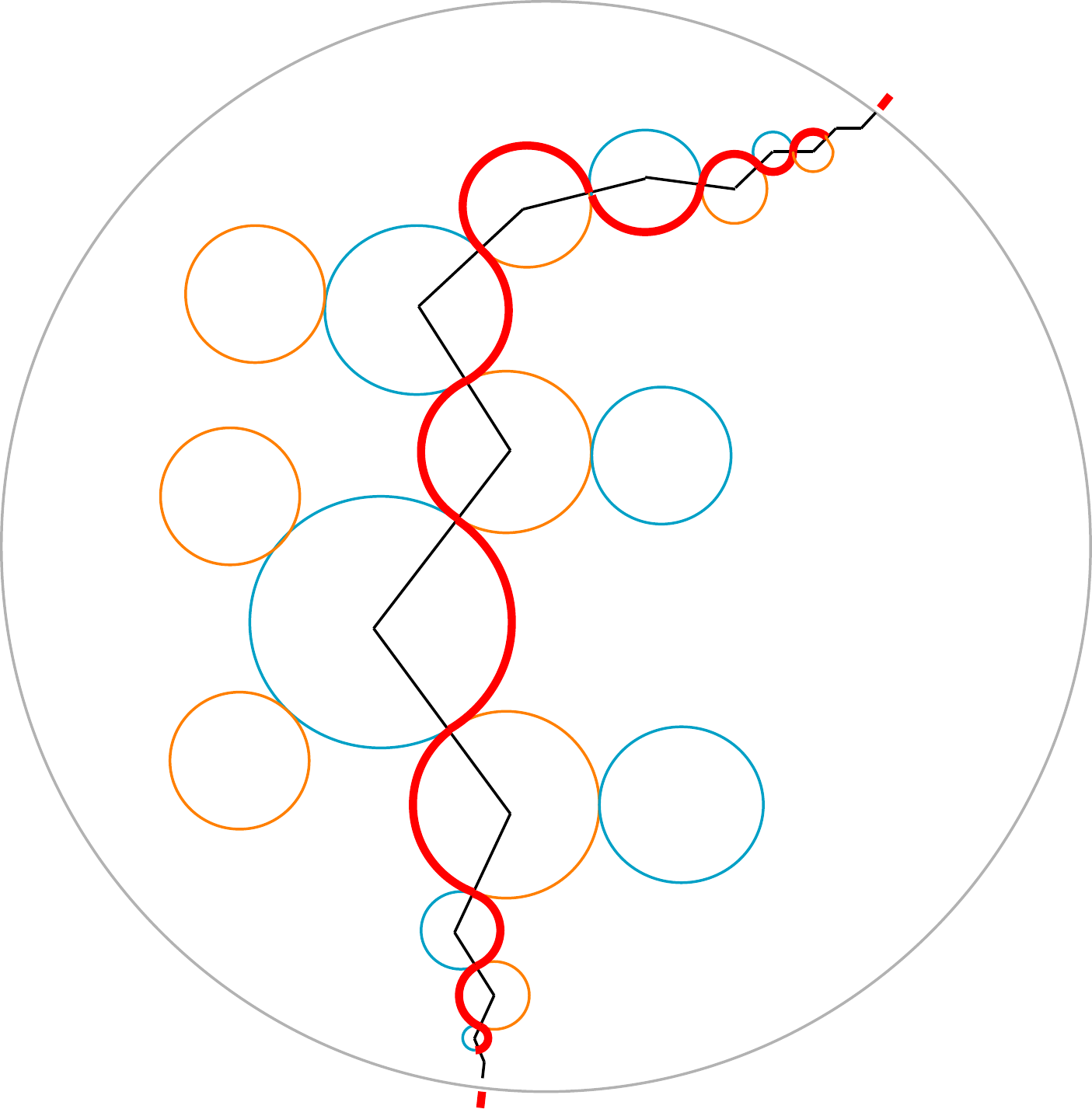}
   	\caption{\small The broken line represents a bi-infinite path in~$\grpqr$. The associated tango path in~$\bouquetHy$ is thickened. All circles correspond to roundabouts.}
   	\label{F:Tango}
 \end{figure}

Define $\tTpqr$ as the branched surface embedded in~$\ut\Hy$ that is made of those vectors of the form $(x,v)$, where $x$ is a point on~$\bouquetHy$ that belongs to an arc of the form $\cerA$ (\emph{resp.} $\cerB$) and $v$ is a tangent vector that points into $I^f_{B\to A}$ (\emph{resp.} $I^f_{A\to B}$).
The surface~$\tTpqr$ is a surface made of ribbons of the form $\cerA \times I^f_{B\to A}$ (\emph{resp.} $\cerB \times I^f_{A\to B}$) that branches in the fibers of the switches of~$\bouquetHy$.
We equip $\tTpqr$ with a semi-flow, denoted by~$\fttemp$, whose orbits are, on every ribbon, of the form $\cerA \times \{*\}$ (\emph{resp.} $\cerB \times \{*\}$). 
Since we are interested in the topology of orbits and not in the time, the speed is not relevant, we can arbitrarily decide that~$\fttemp$ travels along each branch at unit speed. 
In order to say that $\tTpqr$ is a template, we need to check that ribbons are glued nicely.

\begin{lemma}
  	\label{L:indeedtemplate}
  	Every branching arc of the surface $\tTpqr$ is of the form~$M\times I$, where $M$ is a switch of the graph~$\bouquetHy$ and, 
	denoting the vertices of~$\bouquetHy$ that surround~$M$ by $A, B$, the arc $I$ in~$\bord\Hy$ is the segment~$I^f_{A\to B} \cup I^f_{B\to A}$.
\end{lemma}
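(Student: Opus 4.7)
The plan is to localize the branching locus of $\tTpqr$ and describe it at each switch. First I would argue that branching can only occur in a fiber over a switch: each ribbon $c \times J$ of $\tTpqr$ has smooth rectangular interior, and two distinct ribbons can only share a boundary where their base arcs share endpoints in $\bouquetHy$; the arcs $\cerA$ and $\cerB$ share endpoints only at switches. Every branching arc therefore lies in some fiber $\{M\} \times \bord\Hy$ over a switch $M$.

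Next I would fix a switch $M$ on the edge $(AB)$ of $\grpqr$ and enumerate the four arcs of $\bouquetHy$ incident to $M$: on the $A$-roundabout, the outgoing $\cerA$ (toward $(AB^+)$) and the incoming $\cerAm$ (from $(AB^-)$); on the $B$-roundabout, the outgoing $\cerB$ (toward $(A^+B)$) and the incoming $\cerBm$ (from $(A^-B)$). The corresponding four ribbons contribute to $\{M\} \times \bord\Hy$ the fiber arcs $I^f_{B \to A}$, $I^f_{B^- \to A}$, $I^f_{A \to B}$, $I^f_{A^- \to B}$. The two outgoing fibers $I^f_{A \to B}$ and $I^f_{B \to A}$ abut at the normal extremity $\xi_{AB}$ by the construction of $\Spec^f$ in Section~\ref{S:Explicit}, so their union is the single arc $I = I^f_{A \to B} \cup I^f_{B \to A}$ of the statement.

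To finish, I would show that the two incoming fibers $I^f_{B^- \to A}$ and $I^f_{A^- \to B}$ together cover the same arc $I$. The input is admissibility: by Lemma~\ref{L:PathsConverge} every $\xi \in I$ is the forward limit of some $\Spec^f$-admissible bi-infinite path traversing the edge $(AB)$ of $\grpqr$, whose tango lift necessarily passes through $M$; since this lift enters $M$ on one of the two incoming ribbons, $\xi$ belongs to $I^f_{B^- \to A}$ or $I^f_{A^- \to B}$, and the reverse inclusion is immediate because any path leaving $M$ must continue into an outgoing ribbon. All four short edges at $M$ then glue along $\{M\} \times I$, as claimed. The main subtlety I anticipate is the degenerate configuration $p = 2$: there the neighbours $B^+$ and $B^-$ of $A$ coincide, two of the four arcs at $M$ collapse and a fiber inclusion becomes strict --- the phenomenon foreshadowed in the caption of Figure~\ref{F:Tpqr}. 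This case would be handled in parallel with Section~\ref{S:p=2}, with an adapted enumeration of incident arcs, but the final description of the branching arc as $\{M\} \times I$ is unchanged.
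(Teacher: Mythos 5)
Your overall plan tracks the paper's: localize branching to fibers over switches, enumerate the four arcs $\cerA,\cerB,\cerAm,\cerBm$ meeting at~$M$ together with their fiber intervals $I^f_{B\to A}, I^f_{A\to B}, I^f_{B^-\to A}, I^f_{A^-\to B}$, and identify the branching arc with $I^f_{A\to B}\cup I^f_{B\to A}$. Where the paper is content to read the interval inclusions off Figure~\ref{F:Bouquet}, you try to supply explicit arguments, and this is where a genuine gap appears.

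The problematic step is the covering argument for the incoming fibers. You take $\xi\in I$, invoke Lemma~\ref{L:PathsConverge} to produce an $\Spec^f$-admissible bi-infinite path $\gamma$ through the edge $(AB)$, and then write \emph{``since this lift enters $M$ on one of the two incoming ribbons, $\xi$ belongs to $I^f_{B^-\to A}$ or $I^f_{A^-\to B}$.''} But the tango lift entering $M$ via the \emph{arc} $\cerBm$ is automatic for any path using the edge; it does not by itself place $\xi$ in the fiber $I^f_{A^-\to B}$ of the corresponding ribbon. Admissibility of $\gamma=(\dots,A^*,B,A,\dots)$ only yields $\xi\in I^f_{A^*\to B}$ for the actual predecessor $A^*$ of $B$ along $\gamma$, and $A^*=A^-$ is exactly the statement to be proved, not a given. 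Saying that the lift ``enters on the incoming ribbon'' (meaning: lies on the subset $\cerBm\times I^f_{A^-\to B}$ of $\tTpqr$) presupposes that the tango lift of an admissible path lies on the template -- a fact which rests on this lemma, not the other way around. The same circularity affects your ``reverse inclusion'': arguing that an orbit on an incoming ribbon ``must continue into an outgoing ribbon'' already assumes the semi-flow is globally defined on $\tTpqr$, which is what the lemma is meant to justify. The non-circular route, which the paper gestures at via the figure, is a direct geometric check on the explicit spectacles $\Spec^f$ built from bigons in Section~\ref{S:Explicit}: using the identification of endpoints of these intervals with normal extremities $\xi_{A'B'}$ of nearby edges, one verifies the interval containments $I^f_{A\to B}\subset I^f_{B^-\to A}$ and $I^f_{B\to A}\subset I^f_{A^-\to B}$ (hence both containment in and covering of~$I$) by examining the two faces of $\grpqr$ adjacent to $(AB)$. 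Your opening reduction (branching only over switches) and the observation that the outgoing fibers abut at $\xi_{AB}$ are fine, as is flagging the $p=2$ degeneration.
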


\begin{proof}
  	In the fiber of $M$ four branches meet, namely $\cerA, \cerB, \cerAm$, and $\cerBm$.
  	Their visual intervals are respectively $I^f_{B\to A}, I^f_{A\to B}, I^f_{B^-\to A}$, and $I^f_{A^-\to B}$. 
  	Figure~\ref{F:Bouquet} shows that all these segments are included in~$I^f_{A\to B} \cup I^f_{B\to A}$. 
\end{proof}

Lemma~\ref{L:indeedtemplate} ensures that~$\tTpqr$ is indeed a template: it is made of ribbons, its branching arcs are segments, each branching segment has two incoming ribbons ($\cerAm\times I^f_{B^-\to A}$ and  $\cerBm\times I^f_{A^-\to B}$) that overlap and two outgoing ribbons ($\cerA\times I^f_{B\to A}$ and  $\cerB\times I^f_{A\to B}$) that do not overlap.
Along the branching segments, the vector fields induced by the different ribbons coincide.
By construction, $\tTpqr$ is $\Gpqr$-equivariant, so we can mod out.

\begin{definition}
  	\label{D:Tpqr}
  	The template~$\Tpqr$ in~$\utS$ is defined as the quotient~$\tTpqr/\Gpqr$. 
  	It is equipped with the semi-flow $\ftemp$ that is the projection of $\fttemp$.
\end{definition}

We can now conclude.

\begin{proof}[Proof of Theorem~\ref{T:Template}]
  	First we check that $\Tpqr$ is embedded in the way described by Figure~\ref{F:Tpqr}. 
  	Indeed, contracting the ribbons of $\Tpqr$ into arcs bring $\Tpqr$ in a neighbourhood of the vector field~$v$ describes on Figure~\ref{F:PairOfPants}, 
	and the arcs $\cerA$, $\cerB$ wind once around the corresponding points $\Ad$ and $\Bd$. 
  	For the framing of the ribbons of~$\Tpqr$, notice that it is given by the direction of the fibers of~$\utS$. 
  	The framing of Figure~\ref{F:Tpqr} is obtained by a rotation of $90$ degrees, that is, by an isotopy. 
  	This proves that the template $\Tpqr$ is indeed embedded as in Figure~\ref{F:Tpqr}.

  	Now, all we have to do is to describe a map, say~$\deformation$, from~$\utS$ to~$\Tpqr$ that will transport the orbits of the geodesic flow~$\fgeod$ 
	onto the orbits of the semi-flow~$\ftemp$ on the template, and check that its restriction to finite collection of periodic orbits is a topological equivalence and an isotopy.
  	Actually, it is easier to do that in a $\Gpqr$-equivariant way in~$\ut\Hy$.
 
  	Assume that $\eta, \xi$ are two points in~$\bord\Hy$ and denote by~$g_{\eta\to\xi}$ the geodesics in~$\Hy$ that connects them.
  	By Theorem~\ref{P:Coding}, there also exists a unique admissible path~$\gamma^f_{\eta\to\xi}$ in~$\grpqr$ that connects $\eta$ and $\xi$.
  	We denote it by~${\curvy\gamma}_{\eta\to \xi}$ the associated tango path in~$\bouquetHy$. 
  	The lift of~$g_{\eta\to\xi}$ in~$\ut\Hy$ is the orbit~$g_{\eta\to\xi}\times\{\xi\}$ of the geodesic flow~$\fgeod$.
  	We now define~$\tdeformation$ as the map that takes~$g_{\eta\to\xi}\times\{\xi\}$ onto~$\curvy\gamma_{\eta\to\xi}\times\{\xi\}$, which is an orbit of~$\fttemp$.
  	Of course, this can be done using an isotopy in the level~$\Hy\times\{\xi\}$ and in a $\Gpqr$-equivariant way.
  	By Theorem~\ref{P:Coding}, the image of the map~$\tdeformation$ is exactly the template~$\tTpqr$.
  	Modding out the map~$\tdeformation$ by~$\Gpqr$, we obtain the desired map~$\deformation$. 

  	In order to check that we have an isotopy when we restrict to finite collections of periodic orbits, 
	assume that $\gamma, \gamma'$ are two arbitrary lifts in~$\ut\Hy$ of two periodic orbits of~$\fgeod$. 
  	Then $\gamma, \gamma'$ cannot point in the same direction in~$\bord\Hy$, 
	for otherwise they would become arbitrarily close, which is not possible for two periodic orbits. 
  	Therefore, the deformation~$\tdeformation$ can be realized for these two orbits by two isotopies that live in different levels of~$\ut\Hy$. 
  	These can be extended to a global isotopy of~$\ut\Hy$.
  	The same idea works for arbitrary (but finitely) many periodic orbits.
\end{proof}

\section{Concluding remarks}
\label{S:Questions}

\subsection{Templates with two ribbons}

In this article, we have given a construction of a template for some particular geodesic flows. 
One can wonder about the optimality of the result, that is, whether it can be extended to other geodesic flows on other surfaces or 2-dimensional orbifolds. 
It was proven in~\cite{BW} that in this general case, there exists always a template, but with no control on the number of ribbons and the way they are embedded. 
Explicit constructions were given in~\cite{Pierre}, but they always yield templates with more than two ribbons. 
Actually we have

\begin{proposition}
	Assume that $G$ is a Fuchsian group such that $\Hy/G$ is not a sphere with three cone points. 
	Then there is no template with two ribbons that describes the isotopy classes of all periodic orbits of the geodesic flow on~$\ut\Hy/G$.
\end{proposition}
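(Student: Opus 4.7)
The plan is a proof by contradiction. Suppose such a template $\mathcal{T}$ with two ribbons exists in $M := \ut(\Hy/G)$, and let $\alpha$ and $\beta$ denote the cores of its two ribbons, identified with specific periodic orbits of the geodesic flow $\varphi$ via the postulated correspondence.

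The first step is a homological constraint. A regular neighbourhood $N$ of $\mathcal{T}$ in $M$ deformation retracts onto $\mathcal{T}$, which in turn retracts onto the wedge $\alpha \vee \beta$, so $H_1(N;\Z) = \Z[\alpha] \oplus \Z[\beta]$. Since every periodic orbit of $\varphi$ is isotopic to an orbit of $\mathcal{T}$, its class in $H_1(M;\Z)$ lies in the subgroup $H := \langle [\alpha], [\beta]\rangle$. Moreover, because the semi-flow on each oriented ribbon of $\mathcal{T}$ is unidirectional, an orbit visiting the two ribbons $n_a$ and $n_b$ times has class $n_a[\alpha] + n_b[\beta]$ with $n_a, n_b \geq 0$. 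The orientation-reversal symmetry of $\varphi$ (every closed geodesic of $\Hy/G$ lifts to two periodic orbits with opposite $H_1$-classes) then forces the semigroup $\Z_{\geq 0}[\alpha] + \Z_{\geq 0}[\beta]$ to be closed under negation, and hence to coincide with the full group $H$. This implies that $[\alpha]$ and $[\beta]$ are torsion in $H_1(M;\Z)$, so $H$ is a finite abelian group generated by two elements.

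The second step is to contradict this conclusion case by case. If $\Hy/G$ has positive genus $g \geq 1$, then handle generators of $\pi_1(\Hy/G)$ are non-peripheral and hyperbolic, hence represented by closed geodesics whose image in $H_1(\Hy/G)$ has infinite order; via the Seifert projection $M \to \Hy/G$ these lift to periodic orbits of $\varphi$ with non-torsion class in $H_1(M;\Z)$, directly contradicting the finiteness of $H$. If $\Hy/G$ is a sphere with $n \geq 4$ cone points, the $n$ cone geodesics are periodic orbits of $\varphi$, each realising the class of an exceptional fiber of the Seifert fibration of $M$; by a Seifert presentation of $H_1(M;\Z)$ analogous to Proposition~\ref{P:Surgery}, the subgroup they span requires at least three generators in all but a few degenerate arithmetic configurations.

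The main obstacle is this latter family of degenerate configurations, in which $H_1(M;\Z)$ is two-generated as an abstract abelian group but not by the specific pair of classes realised by the cores of an arbitrary two-ribbon template. In such cases the $H_1$ argument alone does not suffice, and I would supplement it by a more rigid invariant of the embedding: the matrix of pairwise linking numbers of the $n$ exceptional fibers of $M$ is rigidly prescribed by the Seifert invariants, while on any two-ribbon template the linking numbers among finitely many periodic orbits are determined by the combinatorics of words in the free monoid on two letters together with the embedding data of the ribbons, and a direct check should show these are insufficient to realise the linking pattern of $n \geq 4$ cone fibers.
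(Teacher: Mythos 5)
The paper's proof is a one-line $\pi_1$ argument: the isotopy classes of periodic orbits of a two-ribbon template form a submonoid of $\pi_1(\ut\Hy/G)$ generated by two elements, and the fundamental group of $\ut\Hy/G$ has rank greater than two whenever $G$ is not a triangle group, so the submonoid cannot account for all conjugacy classes of hyperbolic elements. Your proposal descends to $H_1$ instead, which is a genuinely different and strictly weaker invariant, and this weakening is exactly where it breaks down.

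Two specific problems. First, the step ``the semigroup $\Z_{\ge 0}[\alpha]+\Z_{\ge 0}[\beta]$ is closed under negation, hence $[\alpha]$ and $[\beta]$ are torsion'' is not valid: if, say, $[\beta]=-[\alpha]$ in $H_1(M;\Z)$, the semigroup is already the full (infinite cyclic) subgroup $\Z[\alpha]$ without any torsion. Closure under negation only tells you that the semigroup is a genuine subgroup $H$ generated by at most two elements; it does not bound its order. (This by itself isn't fatal, since ``two-generated'' is what you actually use downstream, but the stated finiteness conclusion is wrong.) Second, and more seriously, you explicitly concede that $H_1(\ut\Hy/G)$ can be two-generated as an abstract abelian group for some orbifolds that are not triangle orbifolds, and at that point your proof is not a proof: the proposed rescue by ``pairwise linking numbers of the exceptional fibers'' is an unverified assertion (``a direct check should show\dots''), not an argument. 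These are precisely the cases the statement must cover, so there is a real gap. The paper avoids all of this by never passing to $H_1$: in $\pi_1(\ut\Hy/G)$, a central $\Z$-extension of a Fuchsian group with $2g+n\ge 4$ always has rank at least three, so the two-generated-submonoid obstruction applies uniformly, with no degenerate arithmetic configurations to handle separately. If you want to repair your approach, the fix is to run your first step at the level of $\pi_1$ (or of conjugacy classes in $\pi_1$) rather than $H_1$; once there, the rank argument closes everything and the linking-number detour becomes unnecessary.
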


\begin{proof}
	The set of isotopy classes that can be represented by a template with two ribbons is a submonoid with two generators of~$\pi_1(\ut\Hy/G)$. 
	Since the fundamental groups of all Fuchsian groups different from triangles groups have rank larger than~$2$, they cannot equal a monoid with two generators.
\end{proof}

The natural task is then to wonder what are the next simplest templates and which groups they represent.

\subsection{Space of codings}

In Section~\ref{S:Coding} we have constructed a particular coding, relying on a particular pair of accurate spectacles. 
It would be interesting to understand which codings can be obtained in this way.

\begin{question}
	Given $p,q,r$, what is the set of accurate spectacles for coding the periodic geodesics of~$\Hy/\Gpqr$?
\end{question}

More generally, given a generating set of~$\Gpqr$, a coding of periodic geodesics on~$\Hy/\Gpqr$ is a language. 
What we did is to describe one particular such language. 
Another example is given in~\cite{Pfeiffer} (it is not clear to us whether this coding can be obtained using an accurate pair of spectacles).

\begin{question}
	Given a generating set of~$\Gpqr$, what is the set of those languages that encode the periodic geodesics of~$\Hy/\Gpqr$?
\end{question}


\bibliographystyle{siam}

\end{document}